\documentclass[11pt]{amsart}
\usepackage{amsmath,amsfonts, mathtools,amssymb,latexsym,amssymb,amsthm,adjustbox,mathrsfs,amsbsy, bm,tikz-cd,indentfirst, makecell,graphicx, verbatim, calc, enumerate,xy}
\usepackage[top=1in,bottom=1.3in,left=1in,right=1in]{geometry}
\usepackage[colorlinks=true,linkcolor=blue]{hyperref}
\usepackage{listings}
\usepackage{xcolor}
\theoremstyle{plain}
\newtheorem{theorem}{Theorem}[section]
\newtheorem{lemma}[theorem]{Lemma}

\newtheorem{corollary}[theorem]{Corollary}
\newtheorem{proposition}[theorem]{Proposition}
\newtheorem{remark}[theorem]{Remark}
\newtheorem{definition}[theorem]{Definition}

\newtheorem{question}[theorem]{Question}

\newtheorem{example}[theorem]{Example}
\newtheorem{point}[theorem]{}

\newcommand{\m}{\mathfrak{m}}

\newcommand{\wt}{\widetilde}

\newcommand{\grade}{\operatorname{grade}}
\newcommand{\depth}{\operatorname{depth}}

\newcommand{\Hom}{\operatorname{Hom}}

\newcommand{\Ext}{\operatorname{Ext}}

\newcommand{\type}{\operatorname{type}}
\usepackage{graphicx}
\usepackage{subfigure}
\usepackage{epsfig}
\usepackage{epstopdf}
\usepackage{float} 
\theoremstyle{plain}

\usepackage{xcolor}
\usepackage{titlesec}

\titleformat{name=\section}{}{\thetitle.}{0.5em}{\centering\scshape\large}

\lstdefinelanguage{Macaulay2}{morekeywords={ideal,ring,quotient,vars,degree,dim,syz,res,betti,hilbertSeries,gb,monomialIdeal,primaryDecomposition,radical,Ext,Tor,Hom,Spec,Proj,Sheaf,Module,ChainComplex,Map,apply,for,do,if,then,else,end, cokernel, gens, reduceHilbert},sensitive=true,morecomment=[l]{--},morestring=[b]"}

\allowdisplaybreaks

\begin{document}
\title{Integrally closed ideals with $e_{2}(I)=e_{1}(I)-e_{0}(I)+\lambda(A/I)$}
\author{Shruti Priya}
\address{ Indian Institute of Technology Kharagpur, Kharagpur 721302, West Bengal, India} \email{shruti96312@kgpian.iitkgp.ac.in}
\author{Samarendra Sahoo}
\address{ Indian Institute of Technology Dharwad, Dharwad 580011, Karnataka, India}
\email{samarendra.s.math@gmail.com}

\subjclass{Primary 13A30, 13C14, 13D40, 13D07, 13D45}
\keywords{Associated graded rings, Integrally closed ideals, superficial element, Hilbert coefficients, generalized type}

\begin{abstract} 
Let $(A,\mathfrak{m})$ be a Cohen-Macaulay local ring of dimension $d.$ We introduce and study the notion of the generalized type of $A$ with respect to an $\mathfrak{m}$-primary ideal $I$ denoted by $\operatorname{type}_I(A).$ Let $e_i(I)$ denote $i$th Hilbert coefficients of $A$ w.r.t. $I$. Assuming $I$ is integrally closed and $e_{2}(I)=e_{1}(I)-e_{0}(I)+\lambda(A/I) \neq 0$, we establish a sharp lower bound for $\operatorname{type}_I(A)$ in terms of the multiplicity and certain lengths associated to $I.$  We further show that when this lower bound is attained, the associated graded ring  $G(I)$, is Cohen-Macaulay.  In the case of Buchsbaum local rings of dimension $d$ and depth at least $d-1$, we obtain an optimal lower bound for $e_{2}(\m)$ using the technique of $S_{2}$-fication.  Additionally, for an integrally closed $\mathfrak{m}$-primary ideal $I,$ we  also study the second extremal case  $e_{2}(I)=e_{1}(I)-e_{0}(I)+\lambda(A/I)+1$ and its  consequences on $G(I).$ We also investigate bounds on $e_3(I)$ and for $d=3,$ we  study the consequences when these bounds are attained for.

\end{abstract}

\maketitle

\section{introduction} 
The study of Hilbert coefficients has long been a central theme in commutative algebra and algebraic geometry. These coefficients encode subtle information about the singularities of a ring and capture important algebraic properties of the blowup algebras associated to an ideal $I$, namely the Rees algebra of $I$ denoted by $\mathcal{R}(I)=\displaystyle \bigoplus_{n \geq 0}I^{n}t^{n}\subseteq A[t]$, where $t$ is an indeterminate over $A$, and 
the associated graded ring of $I$ denoted by $G(I)=\displaystyle \bigoplus_{n \geq 0}I^{n}/I^{n+1}$. In particular, the Hilbert coefficients play a crucial role in understanding the depth and Cohen-Macaulayness of these graded structures. We
refer the interested readers to the book \cite{rossi2010hilbert} for the 
main developments and rich references in this direction.

To state the problems addressed and the results obtained in this paper, we first fix some terminologies. Let $(A,\mathfrak{m})$ be a Noetherian local ring of dimension $d$ and $I$ be an $\m$-primary ideal. 
 The  \textit{Hilbert-Samuel function} of $I$ is defined as $H_{I}(n):= \lambda(A/I^{n+1})$ (where $\lambda($\textunderscore$)$ denotes the length).
     For sufficiently large  $n,$ this function coincides with a polynomial $P_{I}(x) \in \mathbb Q[x]$ of degree $d,$  called the \textit{Hilbert-Samuel polynomial} of $I$.  This polynomial can be expressed using binomial coefficients as:
\begin{equation*}
    P_{I}(n)=e_{0}(I)\dbinom{n+d}{d}-e_{1}(I)\dbinom{n+d-1}{d-1}+\cdots+(-1)^{d}e_{d}(I).
\end{equation*}
The unique integers $e_{0}(I), \ldots, e_{d}(I)$ are referred to as the \textit{Hilbert  coefficients} of $A$ with respect to $I$. The leading coefficient $e_{0}(I)$ of the polynomial $P_{I}(n)$ is called the \textit{multiplicity} of $A$ with respect to $I.$ For Cohen-Macaulay rings, $e_{0}(I)$ and $e_{1}(I)$ are non-negative integers, and  a celebrated result of Narita \cite{Narita_1963} shows that  $e_{2}(I)$ is a non-negative integer. Beyond the case where the associated graded ring has good properties on its depth, relatively little is known about the higher Hilbert coefficients.

 Given an ideal $I$, the integral closure of $I$, denoted by  $\overline{I}$ is characterized as the largest ideal containing $I$, such that  $e_{0}(I)=e_{0}(\overline{I})$. Ratliff-Rush introduced the ideal $\widetilde{I}=\displaystyle \bigcup_{n\geq 1}\left(I^{n+1}:I^n\right),$ known as the \textit{Ratliff-Rush closure} of $I,$ which is the largest ideal containing $I$ with the same Hilbert coefficients as $I.$ In particular, if $I$ is a regular ideal then $I \subseteq \widetilde{I} \subseteq \overline{I}.$ Hence, if $I$ is integrally closed, then  $\widetilde{I}=I,$ and we say that $I$ is \textit{Ratliff-Rush closed}. Itoh \cite{itoh1} established that for an integrally closed $\m$-primary ideal $I,$  $e_{2}(I)\geq e_{1}(I)-e_{0}(I)+\lambda(A/I),$ and showed that $G(I)$ is Cohen-Macaulay whenever $e_2(I)\in \{0,1,2\}.$  Motivated by this, Valla \cite[Conjecture 6.20]{valla1998problems} conjectured that the equality for $I=\m$ implies the Cohen-Macaulayness of $G(\m)$; however,  Wang \cite[3.10]{corso2005depth} showed that this fails in dimension $d \geq 2.$ Therefore, the equality alone does not imply that $G(\m)$ is Cohen-Macaulay. However, Corso, Polini, and Rossi \cite{corso2005depth} proved that the conjecture holds under the stronger assumption that $I$ is normal, i.e., $I^{n}$ is integrally closed for every $n \geq 1$. Subsequently, Mishra and Puthenpurakal \cite{ankit} provided sufficient conditions for the conjecture to hold in the case $I=\mathfrak{m}$. More precisely, they proved that if $e_{2}(\mathfrak{m}) = e_{1}(\mathfrak{m}) - e_{0}(\mathfrak{m}) + 1,$ then the Cohen-Macaulay type  satisfies $\type(A) \geq e_{0}(\mathfrak{m}) - h - 1,$ where $h = \mu(\mathfrak{m}) - d$. Furthermore, they proved that Valla’s conjecture holds whenever the Cohen-Macaulay type attains this lower bound.

 In this paper,  motivated by these developments, we investigate the depth of $G(I)$ under the assumption $e_{2}(I)=e_{1}(I)-e_{0}(I)+\lambda(A/I) \neq 0,1,2$ with $I$ integrally closed.  We show that when dimension $d \leq 1,$ this condition forces  $G(I)$ to be Cohen-Macaulay; whereas in dimension $2$ the $\depth G(I)$ exhibits extremal behavior (see Lemma \ref{lemma4.4}). This leads us to identify additional conditions that ensure $G(I)$ is Cohen-Macaulay. 
To identify such conditions, we examine the behavior of the Ratliff-Rush filtration of $I$ (see \ref{ratliffrushfiltration}), which plays a key role in controlling the depth of the associated graded ring. Recall that the Ratliff-Rush filtration of $I$  
is said to \textit{behave well modulo a superficial element} $x$ for $I$, if $\widetilde{I^{n}}A_{1}=\widetilde{I^{n}A_{1}}$ for all $n \geq 1,$ where $A_{1}=A/(x).$ In \cite{Part2}, Puthenpurakal introduced this notion and established several equivalent criteria for such a behavior. In particular,  in \cite[Theorem~6.2]{Part2}, he proved that if $e_2(I) = \cdots = e_d(I) = 0,$ then the associated graded ring with respect to the Ratliff-Rush filtration, $\widetilde{G}(I)=\displaystyle \bigoplus_{n \geq 0} \widetilde{I^{n}}/\widetilde{I^{n+1}}$, is Cohen-Macaulay. In particular, the Ratliff-Rush filtration behaves well modulo a superficial sequence $x_1, \ldots, x_{d-1} \in I$ (see \cite[Definition 4.4]{Part2} for the definition). In contrast, we show that, for $d=3$, under the numerical condition $e_{2}(I)=e_{1}(I)-e_{0}(I)+\lambda(A/I) \neq 0$ together with $e_{3}(I)=0,$ the vanishing of a single higher Hilbert coefficient suffices to guarantee the good behavior of the Ratliff-Rush filtration. Therefore, we prove the following result:

 \begin{proposition}
     
 [Proposition \ref{e3bd}] \label{e3cm}
 Let $(A,\m) $ be a Cohen-Macaulay local ring of dimension $d \geq 2$ and let $I$ be an $\m$-primary integrally closed ideal. If $e_2(I)=e_1(I)-e_0(I)+\lambda(A/I)\neq 0$, then the following hold:
\begin{enumerate} [\normalfont(i)]
     \item  For $d=2,$ $e_3(I)\leq 0.$ Further, if  $e_3(I)=0$ then $G(I)$ is Cohen-Macaulay.

    \item For $d\geq 3,$ $e_3(I)\leq 0.$ Further, if $d=3$ and $e_3(I)=0$, then $\wt{G}(I)$ is Cohen-Macaulay. 
\end{enumerate}
\end{proposition}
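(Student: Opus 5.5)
The plan is to reduce to dimension two and read off $e_3$ from the Ratliff--Rush filtration. Here $e_3$ is understood, when $d=2$, as the third coefficient of the Hilbert series, $h^{(3)}(1)/3!$.

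\textbf{Step 1 (reduction).} Choose $x_1,\ldots,x_{d-2}\in I$ general, hence superficial. By Itoh's theorem the image of $I$ is again integrally closed in $A/(x_1,\ldots,x_i)$ as long as the dimension stays $\geq 2$. The coefficients $e_0,e_1,e_2$ and $\lambda(A/I)$ are preserved under this reduction. Hence the hypothesis $e_2(I)=e_1(I)-e_0(I)+\lambda(A/I)\neq 0$ passes to the dimension two quotient. For $d\geq 4$ we also have $e_3(I)=e_3(IA_1)$, since $e_i$ is preserved modulo a superficial element for $i\leq d-1$. So it suffices to treat $d=2$ and $d=3$.

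\textbf{Step 2 (the case $d=2$).} Let $J=(x,y)$ be a minimal reduction of $I$, and let $\mathcal F=\{\widetilde{I^n}\}$ be the Ratliff--Rush filtration. Since $\depth \widetilde G(I)\geq 1$ in dimension two, the Huneke--Ooishi type formulas hold for $\mathcal F$:
\[
e_1(I)=e_0(I)-\lambda(A/I)+\sum_{n\geq1}\lambda(\widetilde{I^{n+1}}/J\widetilde{I^n}),\qquad e_2(I)\geq \sum_{n\geq1} n\,\lambda(\widetilde{I^{n+1}}/J\widetilde{I^n}),
\]
with the correction terms in the second relation nonnegative. Comparing with the equality hypothesis gives the following.
\begin{itemize}
\item[(a)] $\widetilde{I^{n+1}}=J\widetilde{I^n}$ for all $n\geq 2$.
\item[(b)] The correction terms vanish.
\end{itemize}
We also get $e_2(I)=\lambda(\widetilde{I^2}/JI)\neq 0$.

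From (a) and (b), the $h$-polynomial of $\mathcal F$ has degree $\leq 2$, so $e_3(\mathcal F)=0$. Now use the standard comparison between the Hilbert series of $I$ and of $\mathcal F$, which differ by the series $\sum_n\lambda(\widetilde{I^{n+1}}/I^{n+1})z^n$. This yields
\[
e_3(I)=e_3(\mathcal F)-\sum_{n\geq 0}\lambda(\widetilde{I^{n+1}}/I^{n+1})=-\sum_{n\geq0}\lambda(\widetilde{I^{n+1}}/I^{n+1})\leq 0 .
\]
If $e_3(I)=0$, then $\widetilde{I^n}=I^n$ for all $n$. Hence $G(I)=\widetilde G(I)$ has positive depth, and $I^{n+1}=JI^n$ for $n\geq 2$. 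It remains to check that $I^2\cap J=JI$; I expect to get this from (b), because the vanishing correction term in dimension one is precisely $\lambda((I^{2}\cap J)/JI)$. Then Valabrega--Valla gives that $G(I)$ is Cohen--Macaulay.

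\textbf{Step 3 (the case $d=3$).} Let $x$ be superficial and $A_1=A/(x)$. I would use Puthenpurakal's comparison for the Ratliff--Rush filtration to write
\[
e_3(I)=e_3(IA_1)-\sum_n \lambda\bigl(\widetilde{I^nA_1}/\widetilde{I^n}A_1\bigr),
\]
with the correction term nonnegative. By Step 2, $e_3(IA_1)\leq 0$, so $e_3(I)\leq 0$.

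If $e_3(I)=0$, both terms vanish. First, the Ratliff--Rush filtration behaves well modulo $x$. Second, by Step 2 applied in $A_1$, $G(IA_1)$ is Cohen--Macaulay and $\widetilde{I^nA_1}=I^nA_1$. Puthenpurakal's criterion then identifies $\widetilde G(I)/x^*\widetilde G(I)$ with $\widetilde G(IA_1)=G(IA_1)$, where $x^*$ is regular on $\widetilde G(I)$ because $\depth \widetilde G(I)\geq1$. So $\widetilde G(I)$ is Cohen--Macaulay.

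\textbf{Main obstacle.} The hardest part is getting the exact formulas with correct signs. This means the Hilbert-series comparison giving $e_3$ in dimension two, and the difference formula for $e_3$ modulo $x$ in dimension three. In particular I must identify the vanishing correction term in Step 2 with $I^2\cap J=JI$. For that I would first try the dimension one reduction $A/(x)$ and Huckaba's formulas for $e_1,e_2$ in terms of $\lambda(I^{n+1}/JI^n)$.
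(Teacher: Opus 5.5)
\textbf{Step 3 rests on a false comparison formula.} Put $I_1=IA_1$. The exact sequences $0\to (I^{n+1}:x)/I^n\to A/I^n\xrightarrow{x}A/I^{n+1}\to A_1/I_1^{n+1}\to 0$ give $h_{I_1}(t)=h_I(t)+(1-t)^3\sum_n\lambda\bigl((I^{n+1}:x)/I^n\bigr)t^n$, hence
\[
e_3(I)=e_3(I_1)+\sum_{n\ge 0}\lambda\bigl((I^{n+1}:x)/I^n\bigr)\ \ge\ e_3(I_1).
\]
This inequality runs opposite to what you need, so $e_3(I_1)\le 0$ tells you nothing about $e_3(I)$.

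The correct version has the Ratliff--Rush coefficient $\wt{e}_3(I_1)$ in place of $e_3(I_1)$. Combine the display with $e_3(I_1)=\wt{e}_3(I_1)-\sum_n\lambda(\wt{I_1^n}/I_1^n)$ and sum over $j$ the exact sequences
\[
0\to (I^{j+1}:x)/I^j\to \wt{I^j}/I^j\xrightarrow{x} \wt{I^{j+1}}/I^{j+1}\to \wt{I_1^{j+1}}/I_1^{j+1}\to \wt{I_1^{j+1}}/\wt{I^{j+1}}A_1\to 0 .
\]
This gives $e_3(I)=\wt{e}_3(I_1)-\sum_{n\ge1}\lambda(\wt{I_1^n}/\wt{I^n}A_1)$. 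So what you need from Step 2 is $\wt{e}_3(I_1)=0$, not $e_3(I_1)\le 0$. Your computation there does give this, provided $x$ is chosen by Itoh so that $I_1$ is integrally closed. This is the paper's argument.

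\textbf{The same confusion breaks your equality case.} With the correct formula, $e_3(I)=0$ only kills the cokernel sum, i.e.\ the Ratliff--Rush filtration behaves well modulo $x$. It does not give $e_3(I_1)=0$: one then has $e_3(I_1)=-\sum_n\lambda((I^{n+1}:x)/I^n)$, which vanishes only when $\depth G(I)>0$. So you cannot use Step 2 to get $\wt{G}(I_1)=G(I_1)$ Cohen--Macaulay. You need $\wt{G}(I_1)$ Cohen--Macaulay directly. The paper cites \cite[Proposition 3.4(iv)]{Part2} for this. Alternatively, apply Valabrega--Valla to the Ratliff--Rush filtration, using $\wt{I_1^{n+1}}=J_1\wt{I_1^n}$ for $n\ge 2$ and $\wt{I_1^2}\cap J_1\subseteq\overline{I_1^2}\cap J_1=J_1I_1$ (Huneke--Itoh). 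After that, your descent through $\wt{G}(I)/x^*\wt{G}(I)\cong\wt{G}(I_1)$ is fine.

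\textbf{Step 2 is correct in substance, but item (b) is empty.} In dimension two $\depth\wt{G}(I)\ge 1=d-1$, so $e_1(I)=\sum_{n\ge0}\lambda(\wt{I^{n+1}}/J\wt{I^n})$ and $e_2(I)=\sum_{n\ge1}n\,\lambda(\wt{I^{n+1}}/J\wt{I^n})$ hold exactly. There is no correction term from which to extract $I^2\cap J=JI$. That identity is instead the Huneke--Itoh theorem for integrally closed ideals, which the paper uses in the proof of Theorem \ref{bd}. With it, your ending is valid: $\wt{I^n}=I^n$, $I^{n+1}=JI^n$ for $n\ge 2$, $I^2\cap J=JI$, then Valabrega--Valla. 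It is a more direct alternative to the paper's appeal to Lemma \ref{lemma4.4}, which goes through Itoh's superficial element, the one-dimensional case and Sally's descent.
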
 
Note that in \cite[Theorem 2.8]{mafi}, the authors proved that under the hypothesis of Proposition \ref{e3cm}, if $d=3$, then $e_3(I)\leq 0.$

Inspired by the role of the Cohen-Macaulay type,  we introduce the notion of \textit{generalized type} of $A$ with respect to an ideal $I$ as follows: $$\type_I(A)=\lambda_A\left(\Ext^t_{A}(A/I,A)\right),$$ 
where $\depth A=t.$  We prove that $\mathrm{type}_{I}(A)$ is preserved modulo a non-zero divisor of $A$ (see Proposition \ref{type}). Further, we establish a lower bound on this invariant and prove that: 
  \begin{theorem}[Theorem \ref{bd}]\label{theorem2}
   Let $(A,\m)$ be a Cohen-Macaulay local ring of dimension $d$ and let $I$ be an $\m$-primary integrally closed ideal. Let $J$ be a minimal reduction of $I.$ If $e_{2}(I)=e_{1}(I)-e_{0}(I)+\lambda(A/I) \neq 0$ then $\type_I(A) \geq e_{0}(I)-\lambda(A/I)-\lambda(I/I^{2}+J)\geq 0.$
\end{theorem}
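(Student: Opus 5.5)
We reduce to dimension one and then compute with the socle-type module $(J:I)/J$. Since $\type_I(A)$ is preserved modulo a non-zero divisor (Proposition \ref{type}), we choose a superficial sequence $x_1,\dots,x_{d-1}$ for $I$ generating part of the minimal reduction $J$. We may assume $A/\m$ is infinite, passing to $A[X]_{\m A[X]}$ if necessary. We then pass to $B=A/(x_1,\dots,x_{d-1})$.

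The equality $e_2(I)=e_1(I)-e_0(I)+\lambda(A/I)$ together with integral closedness is Itoh's extremal case. Known analysis (Itoh; Corso--Polini--Rossi) of this case gives that $\lambda(I^2/JI)=e_2(I)$, or more precisely that $I^3=JI^2$ and the reduction number structure is controlled: $e_1(I)=e_0(I)-\lambda(A/I)+\lambda(I^2/JI)$ and $e_2(I)=\lambda(I^2/JI)$. The hypothesis $e_2\neq 0$ guarantees $I^2\neq JI$. In particular these quantities reduce to computations in $B$, since $\lambda(I/I^2+J)$, $e_0$ and $\lambda(A/I)$ are preserved modulo a superficial sequence. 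Integral closedness passes to $B$ only when $d-1\ge 1$ elements are generic, so I would avoid needing integral closedness in $B$. Instead I would use only the consequence $I\cap J \cdot$-type identities, namely $J\cap I^2=JI$, valid for integrally closed $I$ by Huneke/Itoh, and carry that down.

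In $B$ (dimension $1$, Cohen--Macaulay, $J=(x)$), $\type_I(B)=\lambda(\Ext^0_B(B/I,B))$ after one more reduction. Concretely, for a Cohen--Macaulay ring of dimension $t$, $\Ext^t_A(A/I,A)\cong \Hom_{A/J}(A/I,A/J)\cong (J:I)/J$. So the goal is
$$\lambda\big((J:I)/J\big)\ \ge\ e_0(I)-\lambda(A/I)-\lambda(I/I^2+J).$$
Since $e_0(I)=\lambda(A/J)$, the right side equals $\lambda(I/J)-\lambda(I/I^2+J)=\lambda((I^2+J)/J)$. So it suffices to show $I^2+J\subseteq J:I$, up to length, i.e. $\lambda((J:I)/J)\ge\lambda((I^2+J)/J)$. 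This containment means $I^3\subseteq J$. That follows from the extremal condition ($I^3=JI^2\subseteq J$, reduction number at most two in Itoh's equality case, which is the step to cite or re-derive). Nonnegativity of the right side is immediate from $\lambda((I^2+J)/J)\ge 0$.

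The main obstacle is justifying $r_J(I)\le 2$, i.e. $I^3=JI^2$, from $e_2=e_1-e_0+\lambda(A/I)$ in arbitrary dimension. I would try Itoh's inequality chain: $e_1\le e_0-\lambda(A/I)+\lambda(I^2/JI)$-type bounds combined with $e_2\ge$ sums of $\lambda(I^{n+1}/JI^n)$. Equality forces $\lambda(I^{n+1}/JI^n)=0$ for $n\ge 2$, working in dimension two via Ratliff--Rush closure and then lifting using Sally's machine as in Lemma \ref{lemma4.4}. If only $I^3\subseteq J$ (rather than equality) is obtainable, that suffices. I would also use Itoh's $\overline{I^{n+1}}\cap J=J\overline{I^n}$ to obtain $I^3\subseteq J$ directly from $I^3\subseteq \overline{I^2}I\subseteq J$ when $\overline{I^2}$ controls things.
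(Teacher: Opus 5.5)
Your reduction is sound and matches the paper's. Since $J$ is generated by a maximal regular sequence contained in $I$, you have $\type_I(A)=\lambda\bigl((J:I)/J\bigr)$. The bound equals $\lambda\bigl((I^2+J)/J\bigr)$, so everything comes down to proving $I^3\subseteq J$.

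\textbf{The gap is exactly at that step.} The facts you attribute to ``known analysis'' of Itoh's extremal case are false once $d\ge 2$. These are $I^3=JI^2$, $e_2(I)=\lambda(I^2/JI)$ and $e_1(I)=e_0(I)-\lambda(A/I)+\lambda(I^2/JI)$.
\begin{itemize}
\item Suppose $I^3=JI^2$. Together with $I^2\cap J=JI$, which you invoke, the Valabrega--Valla criterion would make $G(I)$ Cohen--Macaulay, i.e.\ it would prove Valla's conjecture. But Wang's example (the first example after Lemma \ref{lemma4.4}) has the integrally closed ideal $\m$ with $e_2(\m)=e_1(\m)-e_0(\m)+1=3\neq 0$ and $\depth G(\m)=0$.
\item In the same example, $\lambda(\m^2/J\m)=\lambda\bigl((\m^2+J)/J\bigr)=5-3=2\neq e_2(\m)$.
\end{itemize}
So the ``main obstacle'' you isolate, $r_J(I)\le 2$, cannot be overcome. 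Obtaining $\lambda(I^{n+1}/JI^n)=0$ for $n\ge 2$ in dimension two and lifting by Sally's machine cannot work.

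Your fallbacks do not close the gap either.
\begin{itemize}
\item The identity $\overline{I^{n+1}}\cap J=J\overline{I^n}$ controls intersections with $J$, not containment in $J$. Moreover, $I^3\subseteq J$ does not follow from integral closedness alone: it fails for $\m$ in $k[[x,y,z_1,\dots,z_d]]/(x^3,xy^2,y^3)$. The extremal hypothesis has to enter, and you do not say how.
\item Cutting down to dimension one does not help. Passing from dimension two to one gives $e_2(I_1)=e_2(I)+\sum_{n\ge 0}\lambda\bigl((I^{n+1}:x)/I^n\bigr)$. Hence the extremal equality does not descend when $\depth G(I)=0$.
\end{itemize}

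\textbf{The missing idea} is to run the extremal argument for the Ratliff--Rush filtration instead of the $I$-adic one, and to stop the reduction at dimension two.
\begin{itemize}
\item For $d=2$ one has $\depth \wt{G}(I)\ge 1$. Hence $e_1(I)=\sum_{j\ge 0}\wt{\nu}_j$ and $e_2(I)=\sum_{j\ge 1}j\,\wt{\nu}_j$, where $\wt{\nu}_j=\lambda(\wt{I^{j+1}}/J\wt{I^j})$.
\item Since $\wt{I}=I$, you get $\wt{\nu}_0=e_0(I)-\lambda(A/I)$. Therefore $e_2(I)-e_1(I)+e_0(I)-\lambda(A/I)=\sum_{j\ge 2}(j-1)\wt{\nu}_j$.
\item The hypothesis forces $\wt{I^3}=J\wt{I^2}$, so $I^3\subseteq \wt{I^3}\subseteq J$. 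This holds even though $I^3\neq JI^2$ in general.
\item For $d\ge 3$, cut down by $d-2$ general superficial elements of $J$ and lift $I^3\subseteq J$. This preserves $e_0,e_1,e_2$, $\lambda(A/I)$, $\lambda(I/I^2+J)$ and, by Itoh, integral closedness.
\item For $d\le 1$, the ordinary $\nu_j$ give $I^3=JI^2$ or $I^3=0$ directly.
\end{itemize}
With $I^3\subseteq J$ in hand, your socle argument finishes the proof as in the paper.
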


As a direct consequence of Theorem \ref{theorem2}, we show that equality implies that  $G(I)$ is Cohen-Macaulay (see Theorem \ref{typeq}). In this case, we also describe the associated 
 $h$-polynomial  of $I$.

 Furthermore, a natural generalization of Cohen-Macaulay rings is Buchsbaum rings.  Recall that a Noetherian local ring $(A, \mathfrak m)$ is \textit{Buchsbaum} if every system of parameter $x_{1},\ldots,x_{d}$ of $A$ is a weak sequence, i.e., 
     $(x_{1},\ldots,x_{i-1}):x_{i}=(x_{1},\ldots,x_{i-1}):\mathfrak m \text{ for } i=1,\ldots, d.$
It is therefore natural to ask whether Itoh’s bounds \cite{itoh1} on $e_{2}(I),$ for integrally closed ideals $I,$ continue to hold for Buchsbaum rings. Motivated by this question, we prove the following result:

 \begin{proposition}[Proposition \ref{buchsbaumring}] 
    Let $(A,\m)$ be a Buchsbaum local ring of dimension $d \geq 2$ with $\depth A\geq d-1$, then $e_{2}(\m)\geq e_{1}(\m)-e_{0}(\m)+1.$
\end{proposition}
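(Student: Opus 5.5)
The abstract says the proof uses $S_2$-fication, so the plan is to compare $A$ with its $S_2$-fication $B$, which is Cohen--Macaulay, and transfer the inequality back to $A$.

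\emph{Step 1: set up $B$.} If $\depth A=d$, the statement is Itoh's inequality for the integrally closed ideal $\m$, so assume $\depth A=d-1\geq 1$. Since $A$ is Buchsbaum, $H^i_\m(A)=0$ for $i\neq d-1,d$, and $H^{d-1}_\m(A)$ is a finite-length $k$-vector space. Under these conditions the $S_2$-fication $B$ exists, is a finite $A$-module, and is Cohen--Macaulay (its local cohomology vanishes below $d$). There is an exact sequence
\[
0\to A\to B\to C\to 0,\qquad \lambda(C)<\infty .
\]
For $d=2$ one has $C\cong H^1_\m(A)$, and $B=\Gamma(\mathrm{Spec}A\setminus\{\m\},\mathcal O)$.

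\emph{Step 2: compare Hilbert coefficients.} From $\lambda(B/\m^{n+1}B)-\lambda(A/\m^{n+1})=\lambda(C)-\lambda(\m^{n+1}B\cap A/\m^{n+1})$ and standard length calculations, we get:
\begin{itemize}
\item $e_0(\m,B)=e_0(\m)$ and $e_1(\m,B)=e_1(\m)$, since the difference is bounded;
\item for $d=2$, the constant term gives $e_2(\m)=e_2(\m,B)+\lambda(C)-\lim_n\lambda(\m^{n+1}B\cap A/\m^{n+1})$.
\end{itemize}
A cleaner route for $d=2$ is the Blancafort/Goto--Nishida formula
\[
e_2(\m)=\sum_{n\ge 0}\big(\lambda(\widetilde{\m^{n+1}}B\cap A/\m^{n+1})\big)\text{-type terms}.
\]
In general I would pass to a superficial element $x$ for $\m$ that is regular on $A$ and $B$, and reduce to $d=2$. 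This uses that the Buchsbaum property, the depth condition $\depth\geq d-1$ and the value of $e_2$ pass to $A/xA$ for general $x$, with $B/xB$ controlled.

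\emph{Step 3: apply Itoh on $B$.} $B$ is a semilocal Cohen--Macaulay ring, and $\m B$ need not be integrally closed. So instead I would apply the Cohen--Macaulay inequality $e_2\ge e_1-e_0+\lambda(B/\overline{\m B})$ to the integrally closed ideal $\overline{\m B}$, or use the Huneke--Ooishi type bound $e_1-e_0+\lambda(B/\m B)\le e_2$ when reductions behave well. Combining this with Step 2 should give
\[
e_2(\m)\ge e_1(\m)-e_0(\m)+\lambda(B/\m B)+(\text{correction from }C).
\]
Finally, we need the correction term together with $\lambda(B/\m B)$ to be at least $1$. Here $\m C=0$ by the Buchsbaum property, and $B/\m B\neq 0$, which should give $\lambda(B/\m B)\ge 1$.

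\emph{Main obstacle.} The hard part is the sign of the correction term in Step 2. One has to show that $\lambda(\m^{n+1}B\cap A/\m^{n+1})$ does not exceed $\lambda(C)$ in a way that destroys the inequality. I would try to use $\m C=0$, which gives $\m B\subseteq A$, and hence $\m^{n+1}B\subseteq \m^n$. The goal is to identify $\m^{n+1}B\cap A$ with the Ratliff--Rush closure $\widetilde{\m^{n+1}}$, which holds since $B$ is the ideal transform. Then Narita/Itoh style nonnegativity of $\sum\lambda(\widetilde{\m^{n+1}}/\m^{n+1})$-type terms would control the sign.
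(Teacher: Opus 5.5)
Your outline follows the same route as the paper: pass to the $S_2$-fication $B=\mathbb S$, note that $e_0,e_1$ agree, correct $e_2$, then apply Itoh's inequality on the Cohen--Macaulay side. It breaks exactly at the point you call the main obstacle, and that obstacle cannot be overcome.

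\textbf{The correction term has the wrong sign.} Your sign in Step~2 is reversed. From $\lambda(B/\m^{n+1}B)-\lambda(A/\m^{n+1})=\lambda(C)-\lambda((\m^{n+1}B\cap A)/\m^{n+1})$ for $n\gg0$, one gets $e_2(\m)=e_2(\m B)-\lambda(C)+\lim_n\lambda((\m^{n+1}B\cap A)/\m^{n+1})$. For $d=2$, the relation $\m C=0$ gives $\m B\subseteq A$, hence $\m B=\m$ and $\m^{n+1}B=\m^{n+1}$. So the limit vanishes and $e_2(\m)=e_2(\m B)-\lambda(C)$. This is the paper's formula $e_2(\m)=e_2^A(\m\mathbb S)+e_1(J)$.

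\textbf{What remains is Itoh's bound for a non-closed ideal.} Since $\lambda(B/\m B)=1+\lambda(C)$, the desired inequality is \emph{equivalent} to $e_2(\m B)\ge e_1(\m B)-e_0(\m B)+\lambda(B/\m B)$. That is Itoh's inequality for the ideal $\m B$ of $B$. But $\m B$ is in general neither integrally closed nor the Jacobson radical of $B$. Replacing it by $\overline{\m B}$ changes $e_1$ and $e_2$, so nothing transfers back to $A$. The paper's proof has the same defect: inequality \eqref{in6} applies Itoh's bound to $\m\mathbb S_i$ without justification.

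\textbf{The statement is false.} Let $k$ be an infinite field and $A=k[[u^4,u^3v,uv^3,v^4]]$, the classical non-Cohen--Macaulay Buchsbaum ring. Here $\dim A=2$, $\depth A=1$, $H^0_\m(A)=0$, and $H^1_\m(A)\cong B/A\cong k$. The ring $B=k[[u^4,u^3v,u^2v^2,uv^3,v^4]]$ is the Cohen--Macaulay $S_2$-fication.
\begin{itemize}
\item Every monomial in $u,v$ of degree divisible by $4$ lies in $A$ except $u^2v^2$. Hence $\lambda(A/\m^{n+1})=2n^2+3n$ for $n\ge1$.
\item This gives $e_0(\m)=4$, $e_1(\m)=3$, $e_2(\m)=-1<0=e_1(\m)-e_0(\m)+1$.
\end{itemize}
In terms of your set-up:
\begin{itemize}
\item $\m B=\m$ has colength $2$ in $B$, and $\m^nB=\mathfrak n^n$ for $n\ge2$, where $\mathfrak n$ is the maximal ideal of $B$.
\item So $e_0(\m B)=4$, $e_1(\m B)=3$, $e_2(\m B)=0$, and Itoh's bound for $\m B$ would read $0\ge 1$.
\item Indeed $u^2v^2\in\overline{\m B}\setminus\m B$.
\end{itemize}
So the correction term is genuinely negative. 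The $S_2$-fication method only yields the bound under an extra hypothesis, such as $\m B$ being integrally closed in $B$. That is what happens in the paper's example of two planes meeting at a point, where $\m B$ is the Jacobson radical of $B$.
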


Our second main study concerns the next extremal case
$e_{2}(I)=e_{1}(I)-e_{0}(I)+\lambda(R/I)+1$ with $I$ integrally closed. When $\dim A \leq 1,$ we explicitly describe the $h$-polynomial of $I,$ and prove that $\deg h_{I}(t)=3.$ In dimension two, we prove that if $I^{3} \nsubseteq J$ for any minimal reduction $J$ of $I$, then $\widetilde{G}(I)$ is Cohen-Macaulay if and only if $\wt{I^2}\cap J=JI$ (see Proposition \ref{depth}). We also show that if $d \ge 2$, then $e_3(I) \le 1$. Thus, we prove the following result:

\begin{theorem}[Theorem \ref{e3=1}]
    Let $(A, \m)$ be a Cohen-Macaulay local ring of dimension $d\geq2$ and let $I$ be an integrally closed $\m$-primary ideal of $A$. Let $J$ be a minimal reduction of $I$. Let $e_{2}(I)=e_{1}(I)-e_{0}(I)+\lambda(A/I)+1$ and $I^3\nsubseteq J$. If $\wt{I^2}\cap J=JI$, then the following hold:
    \begin{enumerate}[\normalfont(i)]
          \item If $d=2$ then $e_3(I)\leq 1$ and $G(I)$ is Cohen-Macaulay if equality holds.
        \item If $d\geq 3$ then $e_3(I)\leq 1$ and $\wt{G}(I)$ is Cohen-Macaulay if equality holds and $d=3$.
    \end{enumerate}
\end{theorem}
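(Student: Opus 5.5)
The plan is to reduce to dimension two by superficial elements, analyse the Ratliff–Rush filtration there, and then handle $e_3$ through Puthenpurakal's difference formula for Ratliff–Rush closures.

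\textbf{Step 1: reduction to dimension two.} Since $I$ is integrally closed, $\wt{I}=I$. For $d\geq 3$, choose by Itoh's results a superficial sequence $x_1,\ldots,x_{d-2}\in J$ whose images in $A'=A/(x_1,\ldots,x_{d-2})$ keep the image $I'$ of $I$ integrally closed. Then $e_i(I')=e_i(I)$ for $i\leq d-2$. If $\depth G(I)$ is not large, $e_2$ need not pass down unchanged, so I would use Itoh's results in the form $e_2(I)\geq e_2(I')$, with equality when the relevant Ratliff–Rush condition holds.

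\textbf{Step 2: the case $d=2$.} The hypothesis $e_2(I)=e_1(I)-e_0(I)+\lambda(A/I)+1$ together with $I^3\nsubseteq J$ should pin down the $h$-polynomial of the one-dimensional quotient $A/(x)$. By the earlier dimension-one analysis, this $h$-polynomial has degree $3$, with $\lambda(I^2/JI)=1$-type lengths and reduction number $3$.

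\begin{itemize}
\item Compare the Ratliff–Rush filtration of $I$ with that of $IA_1$, where $A_1=A/(x)$. Puthenpurakal's formula gives
\[
e_3(I)=e_3(IA_1)-\sum_{n\geq 0}\binom{n+1}{1}\,\lambda\left(\wt{I^{n+1}A_1}/\wt{I^{n+1}}A_1\right)
\]
in dimension $2$. I expect to adapt this from the $e_2$ version and earlier material.
\item Use the hypothesis $\wt{I^2}\cap J=JI$, via Proposition \ref{depth}, to get that $\wt{G}(I)$ is Cohen–Macaulay. This makes the Hilbert function of $\wt{\mathcal F}$ computable from the $h$-polynomial modulo $x$, so that $e_3(\wt{\mathcal F})=\sum_j \binom{j}{3}\wt h_j$.
\item The constraints $\wt h_0=\lambda(A/I)$, $\wt h_1$, $\wt h_2$, $\wt h_3$ coming from $e_2$ extremality force $\sum_j\binom{j}{3}\wt h_j\leq 1$, i.e. $\wt h_3\leq 1$ and $\wt h_j=0$ for $j\geq 4$.
\item Since $e_3(I)=e_3(\wt{\mathcal F})$, this yields $e_3(I)\leq 1$.
\end{itemize}

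\textbf{Step 3: equality for $d=2$.} If $e_3(I)=1$, compare $e_3(I)$ with $\sum_j\binom{j}{3}h_j(I)$ using the Huckaba–Marley type inequality $e_3\leq \sum\binom{j}{3}h_j$ and the vanishing of the correction terms $\lambda(\wt{I^n}/I^n)$. Equality forces $\wt{I^n}=I^n$ for all $n$, so $G(I)=\wt G(I)$ is Cohen–Macaulay.

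\textbf{Step 4: $d\geq 3$.} Apply the bound to $A'$ after noting $e_3(I)\leq e_3(I')$. This monotonicity comes from the $d=3$ version of the same difference formula, where $\wt G(I')$ is Cohen–Macaulay, as in Proposition \ref{e3bd}. We also need the hypotheses to descend to $A'$:
\begin{itemize}
\item $e_2$ extremality descends, since Itoh's inequality plus $e_2(I)\geq e_2(I')$ force equality;
\item $\wt{I^2}\cap J=JI$ descends;
\item $I^3\nsubseteq J$ descends.
\end{itemize}
For $d=3$ with $e_3(I)=1$, equality throughout forces $\wt{I^n}A'=\wt{I^nA'}$ for all $n$. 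This means the Ratliff–Rush filtration behaves well modulo $x$, and Puthenpurakal's criterion then gives that $\wt{G}(I)$ is Cohen–Macaulay.

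\textbf{Main obstacle.} I expect the main difficulty to be Step 4's descent of $\wt{I^2}\cap J=JI$ to $A'$, together with the sign control in the $e_3$ comparison formula. My first approach would be to use $I$ integrally closed and $x$ Itoh-superficial.
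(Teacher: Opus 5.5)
The overall shape of your plan (cut down to dimension $2$, compare with the Ratliff--Rush filtration) is right. But the $e_3$ bookkeeping, which is the whole content of the bound, contains two errors that happen to cancel numerically.

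\textbf{Step 2.} You use $e_3(I)=e_3(\wt{\mathcal F})$ for $d=2$. This is false: the Ratliff--Rush filtration preserves only $e_0,\dots,e_d$. From $\sum_{n\ge0}\lambda(\wt{I^{n+1}}/I^{n+1})t^n=(h_I(t)-\wt h_I(t))/(1-t)^3$ one gets
\[
e_3(I)=\wt e_3(I)-\sum_{n\ge 0}\lambda\bigl(\wt{I^{n+1}}/I^{n+1}\bigr).
\]
This identity is what gives both $e_3(I)\le 1$ and the equality criterion: $e_3(I)=1$ iff $\wt{I^n}=I^n$ for all $n$. Your Huckaba--Marley comparison carries no information, since $e_3=\sum_j\binom{j}{3}h_j$ is simply the definition for the coefficients of $h_I$.

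Also, $\wt e_3(I)=\sum_{j\ge2}\binom{j}{2}\wt\nu_j(I)=1$, with $\wt\nu_j(I)=\lambda(\wt{I^{j+1}}/J\wt{I^j})$, follows from the $e_2$ hypothesis alone. Indeed $\wt\nu_2=1$ and $\wt\nu_j=0$ for $j\ge3$, and only $\depth\wt G(I)\ge1$ is used. So Cohen--Macaulayness of $\wt G(I)$ is not needed for the bound.

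With the correct identity in place, your ending is a valid alternative to the paper's use of Proposition \ref{depth}(i): $G(I)=\wt G(I)$, which is Cohen--Macaulay by Proposition \ref{depth}(ii). The formula in your first bullet is not a correct identity either; the actual dimension-two relation is $e_3(I)=e_3(I_1)-\sum_n n\,\lambda((I^{n+1}:x)/I^n)$, and it is not needed.

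\textbf{Step 4.} The monotonicity is backwards. For $d=3$ and $x$ superficial,
\[
e_3(I)=e_3(I_1)+\sum_{j\ge0}\lambda\bigl((I^{j+1}:x)/I^j\bigr)\ge e_3(I_1).
\]
What you need is $e_3(I)\le\wt e_3(I_1)=1$, i.e. $\sum_j\lambda((I^{j+1}:x)/I^j)\le\sum_j\lambda(\wt{I_1^{j+1}}/I_1^{j+1})$. The missing ingredient is the exact sequence
\[
0\to (I^{j+1}:x)/I^j\to \wt{I^j}/I^j\xrightarrow{\;x\;}\wt{I^{j+1}}/I^{j+1}\xrightarrow{\;g_{j+1}\;}\wt{I_1^{j+1}}/I_1^{j+1}.
\]
Its middle terms telescope when you sum lengths over $j$. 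Equality holds iff every $g_j$ is onto, i.e. $\wt{I^j}A_1=\wt{I_1^j}$ for all $j$. This is exactly the good behaviour of the Ratliff--Rush filtration modulo $x$, which yields $\depth\wt G(I)\ge2$ and $\wt G(I)/x\wt G(I)=\wt G(I_1)$. The difference $e_3(I)-e_3(I_1)$ by itself measures only $\sum_j\lambda((I^{j+1}:x)/I^j)$ and cannot produce this surjectivity.

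Only at this point is $\wt{I_1^2}\cap J_1=J_1I_1$ needed. It is then immediate from $\wt{I_1^2}=\wt{I^2}A_1$ and the modular law, since $x\in J$. So the main obstacle you identify is an artifact of routing the bound through $\wt G(I')$ being Cohen--Macaulay. For $d\ge4$ one simply has $e_3(I)=e_3(I_1)$ down to dimension $3$.

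\textbf{Step 1.} Cutting down to dimension $2$ preserves $e_0,e_1,e_2$, because a step from dimension $k\ge3$ keeps $e_i$ for $i\le k-1$. So no inequality is needed. In any case, Itoh's bound together with $e_2(I)\ge e_2(I')$ would only restrict $e_2(I')$ to two values; it would not force the $+1$ equality.
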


\textbf{Organization of the paper.} This paper is organized into four sections. Section 2 presents some necessary preliminary results. In Section 3, we prove Proposition 1.1 and Theorem 1.2 and study their consequences on the depth of the associated graded ring. This section also contains the proof of Proposition 1.3. Section 4 is devoted to the second extremal case, where we prove Theorem 1.4. We provide examples in support of our claims.

\section{preliminaries}

In this section, we recall the basic definitions and preliminary results that will be used throughout the paper and fix our notations and conventions. Unless stated otherwise, $(A,\mathfrak m)$ denotes a $d$-dimensional Noetherian local ring with an infinite residue field $k=A/\m$, and $I$ denotes an $\mathfrak m$-primary ideal of $A$. For undefined terms, we redirect readers to \cite{Bruns}.

\begin{point} \normalfont
    The \textit{Hilbert series} $HS_{I}(t)$ of $I$,  is the formal power series $\displaystyle \sum_{n \geq 0} \lambda(I^{n}/I^{n+1}) t^{n}$. 
    By Hilbert-Serre theorem, we write 
    $ HS_{I}(t)=\dfrac{h_{I}(t)}{(1-t)^{d}}$,
where $h_{I}(t) \in \mathbb{Z}[t]$ is the unique polynomial with $h_{I}(1) \neq 0$, known as the \textit{h-polynomial} of $I.$   The power series, $\displaystyle \sum_{n \geq 0} H_{I}(n)t^{n}$ is called the \textit{Hilbert-Samuel series} of $I.$ Note that $\displaystyle \sum_{n \geq 0} H_{I}(n)t^{n}=\frac{h_{I}(t)}{(1-t)^{d+1}},$
and an easy computation shows that for all $i \geq 0,$ we have $e_{i}(I)=\dfrac{h^{(i)}_{I}(1)}{i!},$
 where $h^{(i)}_{I}(1)$ is the $i^{th}$-formal derivative of $h_{I}(t)$ at $t=1.$
\end{point}

\begin{point} \label{ratliffrushfiltration}
\normalfont
The \textit{Ratliff-Rush filtration} with respect to $I$, is the filtration  $\mathcal{F}=\{\widetilde{I^{n}}\}_{n \geq 0},$ where $\wt{I^n}=\displaystyle \bigcup_{k\geq 1}(I^{n+k}:I^k).$ It is well known that if $\grade(I,A)>0$, then  $\widetilde{I^{n}}=I^{n}$ for sufficiently large $n$ (see \cite{MR506202}).  Set $s^{*}(I)=\min\{k \in \mathbb{N}: \widetilde{I^{n}}=I^{n} \text{ for all } n \geq k\}.$
This numerical invariant is known as the \textit{stability index} of the Ratliff-Rush filtration or the \textit{Ratliff-Rush index.}
\end{point}

\begin{point}\label{superf}
 \normalfont
    An element $x\in I $ is called a \textit{superficial element} for $I,$ if there exist a positive integer $c$ such that $(I^{n+1}:x) \cap I^{c}=I^{n}$ for all $n \geq c.$ If $\depth(I, A) > 0$, then one can show that an A-superficial element is A-regular. Furthermore,
in this case 
 $(I^{n+1} :x) = I^n$ for sufficiently large $n.$ Superficial elements exist if the residue field is infinite (see \cite[Pg. 7]{sally1978numbers}). 
 
 A sequence $x_{1},x_{2},\ldots,x_{j}$ is  a \textit{superficial sequence} for $I,$ if $x_{1}$ is a superficial element for $I$, and $x_{i}$ is a superficial element for $I/(x_{1},x_{2},\ldots,x_{i-1})$ for all $i=2,\ldots,j.$ 

 If $\dim A\geq 2$ and $I$ is integrally closed, then a result of Itoh \cite[Page 648]{itoh1} guarantees the existence of a superficial element $x \in I$ such that the ideal $I/(x)$ is an integrally closed ideal.

\end{point}

\begin{point}\label{reduction}
    \normalfont   An ideal $J \subseteq I$ is a \textit{reduction} of $I$ if $JI^{n}=I^{n+1}$ for some $n \in \mathbb{N,}$ 
and  \textit{minimal reduction} if it is minimal with respect to inclusion among all reductions. The \textit{reduction number} of $I$ with respect to $J$ is defined as 
   $r_{J}(I):=\min\{n: JI^{n}=I^{n+1}\}.$ From \cite[page 12]{rossi2010hilbert}, if $I$ is $\m$-primary and the residue field is infinite, then the ideal generated  by any maximal superficial sequence for $I$ is a minimal reduction of $I$.
\end{point}

\begin{point}\cite[Definition 15 and Theorem 16]{HCCMM} 
\normalfont
Let $(A,\m)$ be a Cohen-Macaulay local ring of dimension $d\geq 1$ and $I$ an $\m$-primary ideal. Then $e_1(I)\geq e_0(I)-\lambda(A/I).$ Moreover, equality holds if and only if  $G(I)$  has \textit{minimal multiplicity}, i.e., $\deg h_{I}(t) \leq 1$; in this case, $G(I)$  is Cohen-Macaulay. 
\end{point}

\section[]{ $e_{2}(I)= e_{1}(I)-e_{0}(I)+\lambda(A/I)$}
   In this section, we study the integrally closed $\m$-primary ideal $I$ in a Cohen-Macaulay ring $(A, \m)$ satisfying  the numerical condition $e_{2}(I)= e_{1}(I)-e_{0}(I)+\lambda(A/I),$ and its consequences  on $\depth G(I).$
We begin by proving that for $d \leq 1$, 
the degree of the 
$h$-polynomial of $I$ is at most two and that the associated graded ring  $G(I)$ is Cohen-Macaulay. In dimension two, the associated graded ring exhibits a dichotomous behavior: its depth is either $0$ or $2$. Further, in this section, we introduce notion of  the generalized type of a ring $A$ with respect to an ideal $I,$ and establish a lower bound for it. Consequently, we prove that if this lower bound is achieved, then the associated graded ring is Cohen-Macaulay.
Note that if  $e_2(I)=e_{1}(I)-e_{0}(I)+\lambda(A/I)=0$ then $e_{1}(I)=e_{0}(I)-\lambda(A/I)$, and hence,  by \cite[Theorem 16]{HCCMM}, $G(I)$ is Cohen-Macaulay. Therefore, throughout this section, we assume  $e_{2}(I)= e_{1}(I)-e_{0}(I)+\lambda(A/I) \neq 0.$

\begin{proposition} \label{prop4.1}
    Let $(A, \m) $ be a Cohen-Macaulay local ring of dimension less than or equal to $1,$ and $I$ an integrally closed $\m$-primary ideal. If $e_{2}(I)=e_{1}(I)-e_{0}(I)+\lambda(A/I)$, then 
    \begin{enumerate}[\normalfont(i)]
        \item \label{prop4.1(i)} $\deg h_{I}(t)\leq 2$,
        \item \label{prop4.1(ii)} $G(I)$ is Cohen-Macaulay.
    \end{enumerate}
\end{proposition}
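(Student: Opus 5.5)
The plan is to treat $d=0$ and $d=1$ separately. In both cases I express $e_1,e_2$ through lengths and read off from the hypothesis which lengths must vanish.

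\emph{Case $d=0$.} Here every proper ideal is nilpotent, so the integral closure of an $\m$-primary ideal is $\m$. Since $I$ is integrally closed, $I=\m$. $G(I)$ is trivially Cohen-Macaulay. Write $\lambda_n=\lambda(\m^n/\m^{n+1})$, so that $h_I(t)=\sum_n\lambda_n t^n$. Then
\[
e_0=\sum_{n\ge 0}\lambda_n,\qquad e_1=\sum_{n\ge 0} n\lambda_n,\qquad e_2=\sum_{n\ge 0}\binom{n}{2}\lambda_n .
\]
Using $\lambda(A/I)=\lambda_0$, the hypothesis becomes
\[
\sum_{n\ge 1}\Bigl(\binom{n}{2}-(n-1)\Bigr)\lambda_n=\sum_{n\ge 1}\frac{(n-1)(n-2)}{2}\,\lambda_n=0 .
\]
Every coefficient is nonnegative and is positive for $n\ge 3$. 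Hence $\lambda_n=0$ for $n\ge 3$, which gives $\deg h_I(t)\le 2$.

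\emph{Case $d=1$.} Choose a superficial element $x\in I$; it is $A$-regular and $J=(x)$ is a minimal reduction. For a one-dimensional Cohen-Macaulay ring I use the standard formulas
\[
e_0=\lambda(A/J),\qquad e_1=\sum_{n\ge 0}\lambda(I^{n+1}/xI^n),\qquad e_2=\sum_{n\ge 1} n\,\lambda(I^{n+1}/xI^n).
\]
These come from $\lambda(A/I^{n+1})=(n+1)e_0-\sum_{k\le n}\lambda(I^{k+1}/xI^k)$. Since $e_0-\lambda(A/I)=\lambda(I/J)$ is exactly the $n=0$ term of $e_1$, we get $e_1-e_0+\lambda(A/I)=\sum_{n\ge1}\lambda(I^{n+1}/xI^n)$. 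The hypothesis then reads
\[
\sum_{n\ge 2}(n-1)\lambda(I^{n+1}/xI^n)=0 .
\]
So $I^{n+1}=xI^n$ for all $n\ge 2$, i.e.\ $r_J(I)\le 2$.

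For Cohen-Macaulayness I apply the Valabrega--Valla criterion: $G(I)$ is Cohen-Macaulay iff $I^{n+1}\cap J=JI^n$ for all $n\ge 0$. For $n\ge 2$ this is automatic, since $I^{n+1}=xI^n\subseteq J$. For $n=0$ it is trivial. The one real input is $n=1$, namely $I^2\cap J=JI$. I would take this from Itoh's theorem (equivalently Huneke's) that holds for integrally closed $\m$-primary ideals and parameter reductions in Cohen-Macaulay rings. This is the step I expect to be the crux; everything else is bookkeeping.

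Once $G(I)$ is Cohen-Macaulay, $x^*$ is $G(I)$-regular. Hence $h_I(t)=\lambda(A/I)+\sum_{n\ge1}\lambda(I^n/xI^{n-1})t^n$. The coefficients vanish for $n\ge 3$, so $\deg h_I(t)\le 2$, completing (i) and (ii).
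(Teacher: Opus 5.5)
Your $d=0$ computation and your derivation of $I^{n+1}=xI^n$ for $n\ge 2$ in dimension one match the paper. The differences are in part (ii) and in how you obtain (i) for $d=1$.

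\textbf{Part (ii).} The paper does not use Valabrega--Valla or the Huneke--Itoh theorem. It notes that $s^*(I)\le r_J(I)\le 2$, so $\widetilde{I^n}=I^n$ for $n\ge 2$. Integral closedness gives $\widetilde I=I$. Hence every power of $I$ is Ratliff--Rush closed, which in dimension one gives $\depth G(I)>0$. Both arguments reduce to the same single condition. Since $I^{k+1}=x^{k-1}I^2$ for $k\ge 1$, one has $\widetilde I=I^{k+1}:x^k=I^2:x$, so $\widetilde I=I$ is equivalent to $I^2\cap J=JI$. Your route makes the mechanism explicit, but it spends the deeper Huneke--Itoh theorem, which genuinely needs integral closedness. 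The paper's route only needs $I$ to be Ratliff--Rush closed, as it remarks after the proof.

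\textbf{Part (i) for $d=1$.} One formula in your last paragraph is wrong as written. When $x^*$ is $G(I)$-regular, $h_I(t)$ is the Hilbert series of $G(I)/x^*G(I)$. Its coefficients are therefore $\lambda\bigl(I^n/(xI^{n-1}+I^{n+1})\bigr)=\nu_{n-1}-\nu_n$, where $\nu_n=\lambda(I^{n+1}/xI^n)$. They are not $\lambda(I^n/xI^{n-1})=\nu_{n-1}$; your expression evaluated at $t=1$ gives $\lambda(A/I)+e_1$ rather than $e_0$.

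The conclusion $\deg h_I(t)\le 2$ survives, since $\nu_{n-1}-\nu_n=0$ for $n\ge 3$. You also do not need (ii) to get (i). Differencing your own formula $\lambda(A/I^{n+1})=(n+1)e_0-\sum_{k\le n}\nu_k$ gives $\lambda(I^n/I^{n+1})=e_0-\nu_n$. Hence
\[
h_I(t)=\lambda(A/I)+\sum_{n\ge1}(\nu_{n-1}-\nu_n)t^n
\]
holds with no depth hypothesis, which is essentially what the paper uses.

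Your observation that $I=\m$ when $d=0$ is correct but not needed.
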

\begin{proof}
(\ref{prop4.1(i)})
 Let $d=0$ and $\deg h_{I}(t)=s.$ Set $\rho_{j}(I)=\lambda(I^{j}/I^{j+1})$ for all $j\geq 0,$ then  $h_{I}(t)=\displaystyle \sum_{j=0}^{s}\rho_{j}(I)t^{j}.$  Therefore, the Hilbert coefficients are given by 
        $$ e_{0}(I)=\sum_{j=0}^{s}\rho_{j}(I), \hspace{10pt} e_{1}(I)=\sum_{j=1}^{s}j\rho_{j}(I), \hspace{10pt} e_{2}(I)=\sum_{j=2}^{s}\binom{j}{2}\rho_{j}(I).$$
Since $e_{2}(I)=e_{1}(I)-e_{0}(I)+\lambda(A/I),$   we get $\rho_{3}(I)+3\rho_{4}(I)+\ldots+\dbinom{s-1}{2} \rho_{s}(I)=0.$ This implies that $\rho_{j}(I)=0$ for all $j \geq 3.$ Hence, $\deg h_I(t) \leq 2,$ and therefore, $h(t)=\rho_{0}(I)+\rho_{1}(I)t+\rho_{2}(I)t^2.$

Suppose $d=1$. Set $\nu_{j}(I)=\lambda(I^{j+1}/JI^{j})$ for all $j\geq 0,$ where $J=(x)$ is a minimal reduction of $I.$ From \cite[Theorem 2.5$(c)$]{rossi2010hilbert}, we have $e_{1}(I)=\displaystyle \sum_{j\geq0}\nu_{j}(I)$ and $e_{2}(I)=\displaystyle \sum_{j\geq1}j\nu_{j}(I).$ Hence, we get
\begin{equation*}
    \begin{split}
          e_{2}(I)&
    =\displaystyle \sum_{j\geq2}j\nu_{j}(I)+\nu_{1}(I)-\sum_{j\geq2}\nu_{j}(I)-\nu_{0}(I)-\nu_{1}(I)+e_{1}(I)\\
    &=\displaystyle \sum_{j\geq2}(j-1)\nu_{j}(I)+e_{1}(I)-e_{0}(I)+\lambda(A/I).
    \end{split}
\end{equation*}
  Since $e_{2}(I)=e_{1}(I)-e_{0}(I)+\lambda(A/I)$, we get $ \displaystyle \sum_{j\geq2}(j-1)\nu_{j}(I)=0$, this implies $\nu_{j}(I)=0$ for all $j \geq 2.$ From \cite[Theorem 2.5$(c)$]{rossi2010hilbert}, we get $\deg h_I(t) \leq 2,$ and thus,   $h_{I}(t)=\nu_{0}(I)+(e_{0}(I)-\nu_{1}(I)-\lambda(A/I))t+(\nu_{1}(I)-\nu_{2}(I))t^{2}.$
  
\noindent
(\ref{prop4.1(ii)}) For $d=0,$ nothing to prove. Suppose $d=1.$ 
Let $J=(x)$ be a minimal reduction of $I$, where $x \in I\setminus I^2$ is a superficial element for $I$. From (\ref{prop4.1(i)}), we have $\nu_{j}(I)=0$ for all $j\geq 2$. Consequently, $I^3=JI^2,$ and therefore, $r_{J}(I)\leq 2.$ Since $I$ is integrally closed, we have $\widetilde{I}=I.$ Moreover, from \cite[Proposition 4.2$(i)$]{MR2013172}, it is known that $s^{*}(I) \leq r_{J}(I),$ hence $s^{*}(I)=1.$ Therefore, $G(I)$ is Cohen-Macaulay. 
\end{proof}

 In Proposition \ref{prop4.1}, to prove (\ref{prop4.1(ii)}), it suffices to assume that $I$ is Ratliff-Rush closed. Since throughout this paper we work with integrally closed ideals, we make this assumption here as well.
The following example shows that $I$ being Ratliff-Rush closed is necessary.
\begin{example} \cite[Example 1.4.2]{rossi2010hilbert} \normalfont Let $A=\mathbb{Q}[[t^4,t^5,t^6,t^7]]$ and  $I=(t^4,t^5,t^6).$ Note that  $t^7\in (I^2:I)\setminus I$. Therefore, $I\neq \wt{I}.$  This implies that $I$ is not Ratliff-Rush closed, and hence not integrally closed. 
By CoCoA \cite{cocoa}, the Hilbert Series for $I$ is given by:
\begin{equation*}
    HS_{I}(t)=\dfrac{2+t+t^2}{(1-t)}.
\end{equation*}
Hence, $e_{0}(I)=4,$ $e_{1}(I)=3$ and $e_{2}(I)=1.$ Here,  $e_{2}(I)=e_{2}(I)-e_{2}(I)+\lambda(A/I).$ But, since $s^{*}(I)>1$, thus $G(I)$ is not Cohen-Macaulay. 
\end{example}

\begin{lemma} \label{lemma4.4}
    Let $(A,\m)$ be a two-dimensional Cohen-Macaulay local ring and let $I$ be an $\m$-primary integrally closed ideal. If $e_{2}(I)=e_{1}(I)-e_{0}(I)+\lambda(A/I)$  then $\depth G(I)=0$ or $2$.
\end{lemma}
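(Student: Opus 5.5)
We must show that $\depth G(I)$ is never equal to $1$. So assume $\depth G(I)\geq 1$; the plan is to show that $G(I)$ is then Cohen-Macaulay.

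Since $I$ is integrally closed and $d=2$, by \ref{superf} (Itoh) we can choose a superficial element $x\in I$ such that $I_1=I/(x)$ is integrally closed in the one-dimensional Cohen-Macaulay ring $A_1=A/(x)$. Because $x$ is superficial and $A$-regular, we have $e_i(I)=e_i(I_1)$ for $i=0,1$ and $\lambda(A/I)=\lambda(A_1/I_1)$. The coefficient $e_2$ requires more care. In general $e_2(I)\le e_2(I_1)$, and
\[
e_2(I_1)-e_2(I)=\sum_{n\ge 1}\lambda\bigl((I^{n+1}:x)/I^n\bigr).
\]
This is the standard Singh/Huneke-type formula in dimension $2$.

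\textbf{Step 1: transfer the equality.} If $\depth G(I)\ge 1$, then $x^*$ is $G(I)$-regular. Hence $(I^{n+1}:x)=I^n$ for all $n$, and so $e_2(I_1)=e_2(I)$. Consequently $I_1$ satisfies $e_2(I_1)=e_1(I_1)-e_0(I_1)+\lambda(A_1/I_1)$ in dimension one.

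\textbf{Step 2: dimension one.} Proposition \ref{prop4.1} applies to $I_1$, because $I_1$ is integrally closed and hence Ratliff--Rush closed. It gives that $G(I_1)$ is Cohen-Macaulay, i.e.\ $\depth G(I_1)=1$.

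\textbf{Step 3: lift by Sally's machine.} Since $x^*$ is regular on $G(I)$, we have $G(I)/x^*G(I)\cong G(I_1)$. Therefore $\depth G(I)=\depth G(I_1)+1=2$, and $G(I)$ is Cohen-Macaulay.

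It follows that $\depth G(I)\in\{0,2\}$.

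The main subtlety is in Steps 1 and 3. For Step 3 we use $\depth G(I)\ge 1$, and this is needed to identify $G(I)/x^*G(I)$ with $G(I_1)$. We also need the superficial element $x$ to serve simultaneously as a $G(I)$-regular element (any superficial element is one once $\depth G(I)>0$, a standard fact) and as the element making $I_1$ integrally closed. Itoh's result produces $x$ in a dense open set of $I/\m I$, and superficial elements also form such a set, so both conditions can be met at once.

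If there is doubt about Step 1, an alternative is to use the inequality $e_2(I)\le e_2(I_1)$ together with Itoh's bound for $I_1$, which gives $e_2(I_1)\ge e_1-e_0+\lambda$. Once depth is positive, Step 1 already gives the exact equality, so this alternative is not needed.
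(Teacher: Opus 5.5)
Your proposal is correct and is essentially the paper's own proof. Both arguments take Itoh's superficial element $x$ with $I/(x)$ integrally closed and use $\depth G(I)>0$ to transfer $e_0,e_1,e_2$ to $I_1$. They then apply Proposition~\ref{prop4.1}(ii) in dimension one and lift back. The paper invokes Sally's descent for that last step, which here reduces to your direct identification $G(I)/x^*G(I)\cong G(I_1)$.
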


\begin{proof}
     Since the residue field of $A$ is infinite, we  choose $x \in I\setminus I^2$ a superficial element for $I,$ such that $I/(x)$ is integrally closed. Set $A_1=A/(x)$ and $I_1=IA_1.$ If $\depth G(I)=0,$ there is nothing to prove. Hence, assume that  $\operatorname{depth}G(I) \neq 0.$ Since $\lambda(A_1/I_1)=\lambda(A/I),$ and  by \cite[Proposition 1.2]{rossi2010hilbert}, we have $e_{i}(I_1)=e_{i}(I)$ for $0\leq i\leq2.$ Thus, $e_{2}(I_1)=e_{1}(I_1)-e_{0}(I_1)+\lambda(A_1/I_1)$.  Therefore, by Proposition \ref{prop4.1} (\ref{prop4.1(ii)}), it follows that $\operatorname{depth}G(I_1)=1.$ Hence, by Sally's descent $\operatorname{depth}G(I)=2.$
\end{proof}

We now give examples illustrating that both possible values of $\depth G(I)$ described in Lemma \ref{lemma4.4} can occur.

\begin{example} \normalfont
(i)  \cite[Example 3.10]{corso2005depth}Let $A=\mathbb{K}[[x,y,t,u,v]]/(t^2,tu,tv,uv,yt-u^3,xt-v^3),$ with $\mathbb{K}$ a field and $x,y,t,u,v$ indeterminates. Let $\m$ be the maximal ideal.  By CoCoA \cite{cocoa}, the Hilbert Series of $\m$ is:
        \begin{equation*}
            HS_{\m}(t)=\frac{1+3t+3t^2-t^4}{(1-t)^2}.
        \end{equation*}
        Here $e_{0}(\m)=6$, $e_{1}(\m)=8$ and $e_{2}(\m)=3.$ In particular,  $e_{2}(\m)=e_{1}(\m)-e_{0}(\m)+\lambda(A/\m)$ and  $\depth G(\m)=0.$ 

        \noindent (ii) Let $A=\mathbb Q[[X,Y,Z]]/(X^3+Y^5+Z^5),$  where $X,Y,Z$ are variables and   let $x,y,z$ denote the images of $X,Y,Z$ in $A.$ Let $I= \overline{(x^2,z^4)}=(x^2,xz^2,xyz,xy^2,z^4,yz^3,y^2z^2,y^3z,y^4).$ By CoCoA \cite{cocoa}, the Hilbert Series of $I$ is:
        \begin{equation*}
            HS_{I}(t)=\frac{13+24t+3t^2}{(1-t)^2}.
        \end{equation*}
       Here $e_{0}(I)=40$, $e_{1}(I)=30$ and $e_{2}(I)=3.$ In particular,  $e_{2}(I)=e_{1}(I)-e_{0}(I)+\lambda(A/I)$ and $ \depth G(I)=2.$
\end{example}

The behavior of higher Hilbert coefficients is, in general, subtle and difficult to control, even for integrally closed $\m$-primary ideals in Cohen–Macaulay local rings. While $e_{0}(I), e_{1}(I)$ and $e_{2}(I)$ are always non-negative, much less is known about the sign and vanishing of the higher coefficients. Motivated by a result of Mafi and Naderi \cite[Theorem 2,8]{mafi},  we establish the following theorem. It shows that the third Hilbert coefficient $e_3(I)$ is always non-positive under the given hypothesis that $e_{2}(I)=e_{1}(I)-e_{0}(I)+\lambda(A/I)$. Moreover, the vanishing of $e_3(I)$ leads to several desirable properties, especially the Cohen–Macaulayness of $G(I)$ or $\wt{G}(I)$, and good behavior of the Ratliff–Rush filtration modulo a superficial sequence of length $d-1$.

\begin{proposition}\label{e3bd}
Let $(A,\m) $ be a Cohen-Macaulay local ring of dimension $d \geq 2$ and let $I$ be an $\m$-primary integrally closed ideal. If $e_2(I)=e_1(I)-e_0(I)+\lambda(A/I)\neq 0$, then the following hold:
\begin{enumerate} [\normalfont(i)]
    \item \label{e3bd(i)} For $d=2,$ $e_3(I)\leq 0.$ Further, if  $e_3(I)=0$ then $G(I)$ is Cohen-Macaulay.

    \item \label{e3bd(ii)} For $d\geq 3,$ $e_3(I)\leq 0.$ Further, if $d=3$ and $e_3(I)=0$ then $\wt{G}(I)$ is Cohen-Macaulay. 
\end{enumerate}
\end{proposition}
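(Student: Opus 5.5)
The plan is to induct on $d$ by cutting down with a superficial element $x\in I\setminus I^2$ chosen as in \ref{superf}, so that $I_1=I/(x)$ stays integrally closed in $A_1=A/(x)$. The tool is the comparison of Hilbert series along a superficial element. Writing $c_n=\lambda\bigl((I^{n+1}:x)/I^n\bigr)$, one has
\[
h_I(t)=h_{I_1}(t)-(1-t)^{d}\sum_{n\ge 0}c_n t^{n+1}.
\]
Differentiating at $t=1$ should give the following, with a sign I still need to fix from the expansion:
\begin{itemize}
\item $e_i(I)=e_i(I_1)$ for $i\le d-1$;
\item $e_d(I)=e_d(I_1)-(-1)^{d}\sum c_n$;
\item $e_{d+1}(I)$ differs from $e_{d+1}(I_1)$ by a signed combination of the $c_n$ with nonnegative weights.
\end{itemize}
Since $\lambda(A_1/I_1)=\lambda(A/I)$, the hypothesis passes to $I_1$ for the coefficients that are preserved. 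Itoh's inequality \cite{itoh1}, applied to $I_1$ in the relevant dimension, supplies the inequality needed to compare the remaining ones.

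\textbf{Case $d=2$.} By Lemma \ref{lemma4.4}, $\depth G(I)\in\{0,2\}$.
\begin{itemize}
\item If $\depth G(I)=2$, then $h_I(t)=h_{I_1}(t)$. By Proposition \ref{prop4.1}, $\deg h_{I_1}(t)\le 2$, so $e_3(I)=h_I'''(1)/6=0$.
\item If $\depth G(I)=0$, I need $e_3(I)<0$. Here $e_3(I_1)=\sum_{j\ge 2}\binom{j}{2}\nu_j(I_1)$ with $\nu_j(I_1)=\lambda(I_1^{j+1}/JI_1^{j})$, as in the proof of Proposition \ref{prop4.1}. Repeating that computation, the $e_2$ comparison should give $\sum_{j\ge2}(j-1)\nu_j(I_1)=\sum_n c_n$. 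Substituting into the $e_3$ formula, I would show that $e_3(I)$ equals minus a nonnegative quantity.
\end{itemize}
This gives $e_3(I)\le0$ in both cases. If $e_3(I)=0$, that nonnegative quantity vanishes, forcing all $c_n=0$. Then $x$ is regular on $G(I)$, so $\depth G(I)\neq 0$, and by Lemma \ref{lemma4.4}, $G(I)$ is Cohen-Macaulay.

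\textbf{Main obstacle.} The step I expect to be hardest is the $d=2$, $\depth 0$ inequality. I must show that the weighted sum $\sum n\,c_n$ coming from $e_3$ dominates $\sum_j\binom{j}{2}\nu_j(I_1)$. My first attempt would bound $c_n$ below using the Ratliff–Rush filtration: since $I$ is Ratliff–Rush closed, $\widetilde{G}(I)$ has positive depth. Comparing with the filtration $\{\widetilde{I^n}A_1\}$ should give $c_n\ge\lambda(\widetilde{I_1^{\,n}}/I_1^{n})$-type estimates that match the $\nu_j$.

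\textbf{Case $d\ge3$.} Since $e_3$ is preserved modulo superficial elements when $d\ge4$, I reduce along a superficial sequence preserving integral closure to $d=3$. For $d=3$, I would work with the Ratliff–Rush filtration $\mathcal F=\{\widetilde{I^n}\}$, whose Hilbert coefficients equal those of $I$ and for which $\depth \widetilde G(I)\ge1$. Then the analogous formula for $\mathcal F$ reads
\[
e_3(I)=e_3(\mathcal F A_1)-\sum_n \lambda\bigl(\widetilde{I^{n}A_1}/\widetilde{I^n}A_1\bigr).
\]
Here $e_3(\mathcal FA_1)\le e_3(I_1)\le0$ by the $d=2$ case (up to verifying the sign conventions), so $e_3(I)\le0$. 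If $e_3(I)=0$, every term vanishes. This means the Ratliff–Rush filtration behaves well modulo $x$ and $e_3(I_1)=0$, so $G(I_1)$ is Cohen-Macaulay by part (i). Then $\widetilde G(I)/x^*\widetilde G(I)\cong G(I_1)$ with $x^*$ regular, and Sally's descent gives that $\widetilde G(I)$ is Cohen-Macaulay.
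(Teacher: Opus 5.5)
Your proposal has genuine gaps: the key inequality in part (i) is left unproved, and part (ii) uses an inequality in the wrong direction and misidentifies the quotient in the equality case.

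\textbf{Part (i).} The decisive step is not proved. In the depth-zero case you need $\sum_n n\,c_n\ge\sum_{j\ge2}\binom{j}{2}\nu_j(I_1)$, with equality only when all $c_n=0$. Your suggested ``$c_n\ge\lambda(\widetilde{I_1^{\,n}}/I_1^n)$-type estimates'' give no mechanism for matching the weights $n$ against $\binom{j}{2}$. Your bookkeeping up to that point is right once the display is corrected to $h_I(t)=h_{I_1}(t)-(1-t)^d\sum_n c_nt^n$; the shift to $t^{n+1}$ would change the weights to $n+1$.

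The missing idea is not to cut down at all. Apply the $\nu$-formulas to the Ratliff--Rush filtration of $I$ itself. Since $\widetilde{I^{n+1}}:x=\widetilde{I^n}$, you have $\depth\widetilde G(I)\ge1=d-1$. Put $\widetilde\nu_j=\lambda(\widetilde{I^{j+1}}/J\widetilde{I^j})$. Then $e_1(I)=\sum_{j\ge0}\widetilde\nu_j$, $e_2(I)=\sum_{j\ge1}j\widetilde\nu_j$, and $\widetilde\nu_0=\lambda(I/J)=e_0(I)-\lambda(A/I)$. The hypothesis becomes $\sum_{j\ge2}(j-1)\widetilde\nu_j=0$, so $\widetilde\nu_j=0$ for $j\ge2$ and $\widetilde e_3(I)=\sum_{j\ge2}\binom{j}{2}\widetilde\nu_j=0$. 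Comparing the Hilbert--Samuel series of $\{I^n\}$ and $\{\widetilde{I^n}\}$ gives $e_3(I)=\widetilde e_3(I)-\sum_{n\ge0}\lambda(\widetilde{I^{n+1}}/I^{n+1})\le0$. Equality holds iff all powers of $I$ are Ratliff--Rush closed, i.e.\ iff $\depth G(I)>0$, and Lemma~\ref{lemma4.4} finishes. This is the paper's route; it needs neither the depth dichotomy nor $I_1$.

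\textbf{Part (ii).} Beyond inheriting the gap above, your $d=3$ argument has two further errors.

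First, the displayed formula is wrong as written. For $\mathcal FA_1=\{\widetilde{I^n}A_1\}$ with $x^*$ regular on $\widetilde G(I)$, one has $h_{\mathcal F}(t)=h_{\mathcal FA_1}(t)$, so $e_3(I)=e_3(\mathcal FA_1)$ exactly. The correction term appears only when you compare against the Ratliff--Rush filtration of $I_1$: $e_3(I)=\widetilde e_3(I_1)-\sum_n\lambda(\widetilde{I_1^{\,n}}/\widetilde{I^n}A_1)$. You then need $\widetilde e_3(I_1)\le0$, but your comparison runs the wrong way. Indeed $\widetilde e_3(I_1)=e_3(I_1)+\sum_n\lambda(\widetilde{I_1^{\,n+1}}/I_1^{n+1})\ge e_3(I_1)$, and likewise $e_3(\mathcal FA_1)\ge e_3(I_1)$, so $e_3(I_1)\le 0$ yields nothing. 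What is required is $\widetilde e_3(I_1)=0$, i.e.\ the Ratliff--Rush computation above applied to $I_1$. That is exactly what your part (i) does not supply.

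Second, the equality case is misidentified. Equality gives $\widetilde{I^n}A_1=\widetilde{I_1^{\,n}}$ for all $n$, hence $\widetilde G(I)/x^*\widetilde G(I)\cong\widetilde G(I_1)$, not $G(I_1)$. It also does not force $e_3(I_1)=0$. In the example after this proposition with $d=3$, $e_3(\mathfrak m)=0$ but $\depth G(\mathfrak m)=0$. So $\sum_n c_n>0$, giving $e_3(\mathfrak m_1)=e_3(\mathfrak m)-\sum_n c_n<0$, and $G(\mathfrak m_1)$ is not Cohen--Macaulay. The correct finish is that $\widetilde G(I_1)$ is Cohen--Macaulay (\cite[Proposition 3.4(iv)]{Part2}), followed by Sally's descent.
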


\begin{proof}

   (\ref{e3bd(i)}) Let $J$ be a minimal reduction of $I.$ Set $\wt{\nu_j}(I)=\lambda(\wt{I^{j+1}}/J\wt{I^j})$ for all $j\geq 0.$ From \cite[Equation 3.1]{rossi2010hilbert}, we have $e_1(I)=\sum_{j\geq 0}\wt{\nu_j}(I)$ and $e_2(I)=\sum_{j\geq 1}j\wt{\nu_j}(I).$ From the hypothesis, we obtain $\wt{\nu_j}(I)=0$ for all $j\geq 2.$ Thus, from \cite[Theorem 2.5$(c)$]{rossi2010hilbert}, we have
\begin{equation}\label{al}
    \wt{e}_{3}(I)=\displaystyle \sum _{j \geq 2}\binom{j}{2}\wt{\nu_j}(I)=0.
\end{equation}
 Furthermore, from \cite[1.5$(b)$]{Part2}, we also have
\begin{equation} \label{e3}
    e_{3}(I)=\wt{e}_{3}(I)-\sum_{j\geq 0}\lambda\left(\frac{\widetilde{I^{j+1}}}{I^{j+1}}\right)=-\sum_{j\geq 0}\lambda\left(\frac{\widetilde{I^{j+1}}}{I^{j+1}}\right)\leq 0.
\end{equation}
    Suppose that equality holds in (\ref{e3}), then $\widetilde{I^{j}}=I^{j}$ for all $j \geq 1.$ This implies $\depth G(I) >0.$ Then the result follows from Lemma \ref{lemma4.4}.

\noindent
(\ref{e3bd(ii)}) We proceed by induction on the dimension $d$. Suppose $d=3.$ We choose $x \in I\setminus I^2$ a superficial element for $I,$ such that $I/(x)$ is integrally closed. Set ${A_1}=A/(x)$ and ${I_1}=I/(x)$. From \cite[Proposition 1.2]{rossi2010hilbert}, we have $e_i(I)=e_i(I_1)$ for $i=0,1,2$ and
   \begin{align} \nonumber \label{123}
       e_{3}(I)&=e_{3}({I_1})+\sum_{j\geq 0}\lambda\left((I^{j+1}:x)/I^{j}\right)\\
       &=\widetilde{e}_{3}({I_1})-\sum_{j\geq 0}\lambda\left(\widetilde{{I^{j+1}_1}}/{I^{j+1}_1}\right)+\sum_{j\geq 0}\lambda\left((I^{j+1}:x)/I^{j}\right) \text{ (from \cite[1.5$(b)$]{Part2})}.
   \end{align}
    
 For all $j\geq 0$, we have the following exact sequence: 
\begin{equation}\label{behave}
    0\longrightarrow (I^{j+1}:x)/I^j\longrightarrow  \wt{I^j}/I^j \longrightarrow  \wt{I^{j+1}}/I^{j+1} \xlongrightarrow{g_{j+1}} \wt{{I^{j+1}_1}}/{I^{j+1}_1} \longrightarrow \mathrm{Coker}(g_{j+1}) \longrightarrow 0
\end{equation}
Taking lengths and summing over all $j \geq 0$,   we get:
\begin{equation*}
    \sum_{j\geq 0}\lambda\left((I^{j+1}:x)/I^j\right)+\sum_{j\geq 0}\lambda(\wt{I^{j+1}}/I^{j+1})-\sum_{j\geq 0}\lambda(\wt{I^j}/I^j)-\sum_{j\geq 0}\lambda\left(\wt{{I^{j+1}_1}}/{I^{j+1}_1}\right) \leq 0.
\end{equation*}
This implies that $\displaystyle\sum_{j\geq 0}\lambda\left((I^{j+1}:x)/I^j\right)-\sum_{j\geq 0}\lambda\left(\wt{{I^{j+1}_1}}/{I^{j+1}_1}\right)\leq 0.$ Note that the inequality is strict if the map $g_{j}$ is not surjective for some $j\geq 1.$
 Now, from equations \eqref{al} and (\ref{123}), we obtain 
    \begin{equation} \label{e30}
        e_3(I)=\widetilde{e}_{3}({I_1})-\sum_{j\geq 0}\lambda\left(\widetilde{{I^{j+1}_1}}/{I^{j+1}_1}\right)+\sum_{j\geq 0}\lambda\left((I^{j+1}:x)/I^{j}\right)\leq \widetilde{e}_{3}({I_1})=0.
    \end{equation}
    
    Suppose equality holds in (\ref{e30}), then the map $g_j$ is surjective for all $j\geq 1.$ Therefore, from \cite[Definition 4.4]{Part2}, the Ratliff-Rush filtration of $I$ behaves well modulo $(x),$ this implies $\depth \widetilde{G}(I) \geq 2.$ Furthermore, from \cite[Remark 4.5]{Part2}, $\widetilde{G}(I)/x\widetilde{G}(I)=\widetilde{G}({I_1}).$ Since $\widetilde{G}(I_1)$ is Cohen-Macaulay by \cite[Proposition 3.4$(iv)$]{Part2}, it follows from Sally's descent that $\widetilde{G}(I)$ is Cohen-Macaulay. 

    Assume that the statement is true for $d-1.$ Suppose $d\geq 4.$ Let $x \in I\backslash I^{2}$ be a superficial element for $I$ such that $I/(x)$ is integrally closed.
    From \cite[Proposition 1.2]{rossi2010hilbert}, $e_{3}(I)=e_{3}(I_1)$. By induction hypothesis, $e_3(I)\leq 0.$ 
\end{proof}

We now discuss an example showing that for $d\geq 4$, if $e_3(I)=0$, then $\wt{G}(I)$ may not be Cohen-Macaulay. This example is the same as Example 5.2(1) in \cite{ankit}.

\begin{example}
\normalfont
     Let $A=k[[x,y,t,z,u,v,w,s_1,\ldots , s_{d-3}]]/
    (z^2,zu,zv,uw,zw,uv,vw,yz-u^3,xz-v^3,tz-w^3)$ and $I=\m.$ Note that $A$ is Cohen-Macaulay local ring of dimension $d$ and
    $\operatorname{depth} G(\m)=d-3$. It follows that $G(\m)=\wt{G}(\m)$ and $\wt{G}(\m)$ is not Cohen-Macaulay. We also have
    $$ h_I(t)=1+4t+6t^3-4t^4+t^5.$$ It can be checked that $e_2(\m)=e_1(\m)-e_0(\m)+1=4\neq 0$ and $e_3(\m)=0$.  
\end{example}

We now define the generalized type of $A$ with respect to an ideal.

\begin{definition}\label{def} \normalfont
    Let $(A,\m)$ be a local ring of depth $t$ and $I$ an ideal of $A$ (not necessarily $\m$-primary). The \textit{generalized type} of $A$ with respect to $I$ is denoted as $\type_I(A)$ and defined as: $$\type_I(A)=\lambda_A\left(\Ext^t_{A}(A/I,A)\right).$$ 
\end{definition}
Note that if $I$ is $\m$-primary, $\type_I(A)<\infty$ and for $I=\m$, this invariant coincides with the Cohen-Macaulay type $A$. In the following result, we prove that  $\type_I(A)$ is preserved modulo a non-zero divisor  of $A.$
\begin{proposition}\label{type}
     Let $(A,\m)$ be a local ring and $I$ an ideal of $A$ with $\depth(I, A)=t>0$. Let $x\in I$ be a  non-zero divisor of $A.$ Then $\type_I(A)=\type_{I/(x)}(A/(x)).$
\end{proposition}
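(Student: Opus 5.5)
We use Rees' lemma: if $x$ is a non-zero divisor on $A$ and $M$ is an $A$-module annihilated by $x$, then $\Ext^{i+1}_A(M,A)\cong \Ext^{i}_{A/(x)}(M,A/(x))$ for all $i\ge 0$. Set $\bar A=A/(x)$. Since $x\in I$, the module $A/I$ is annihilated by $x$, and $A/I\cong \bar A/I\bar A$ as an $\bar A$-module. Note also that the definition of $\type_I(A)$ uses $t=\depth A$, while the hypothesis is stated as $\depth(I,A)=t$. For $\m$-primary $I$ these agree; in general we read the proposition with the exponent $t=\grade(I,A)$, which is the natural reading and makes the statement meaningful.

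\textbf{Step 1: the depths match.} We have $\depth(I\bar A,\bar A)=\depth(I,A)-1=t-1$, because $x\in I$ is regular and grade drops by exactly one modulo a regular element lying in the ideal. When $I$ is $\m$-primary this is just $\depth \bar A=\depth A-1$.

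\textbf{Step 2: apply Rees' lemma.} Taking $M=A/I$ and $i=t-1$ gives
\[
\Ext^t_A(A/I,A)\cong \Ext^{t-1}_{\bar A}(\bar A/I\bar A,\bar A).
\]
This is an isomorphism of $A$-modules, and both sides are killed by $x$, so lengths over $A$ and over $\bar A$ coincide. Taking lengths yields $\type_I(A)=\type_{I/(x)}(A/(x))$.

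For completeness, Rees' lemma follows from the short exact sequence $0\to A\xrightarrow{x}A\to \bar A\to 0$. Since $x$ annihilates $M$, multiplication by $x$ induces the zero map on $\Ext^j_A(M,A)$. The long exact sequence therefore splits into short exact sequences
\[
0\to \Ext^i_A(M,A)\to \Ext^i_A(M,\bar A)\to \Ext^{i+1}_A(M,A)\to 0.
\]
One also needs $\Ext^i_A(M,\bar A)\cong \Ext^i_{\bar A}(M,\bar A)$ up to a shift. The clean way to avoid this is to compute $\Ext_A(M,A)$ from a free resolution of $M$ over $A$, tensor it with $\bar A$, and use change of rings.

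The main obstacle is checking the hypotheses of Rees' lemma carefully, namely that $x$ is $A$-regular and annihilates $M$. Both hold by assumption. Beyond that, the argument only requires the grade bookkeeping in Step 1.
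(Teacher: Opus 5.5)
Your proof is correct and is essentially the paper's: both apply Rees' lemma (Bruns--Herzog, Lemma 3.1.16) to $M=A/I$, using that $x\in I=\operatorname{Ann}_A(A/I)$ is $A$-regular, to get $\Ext^t_A(A/I,A)\cong\Ext^{t-1}_{A/(x)}(A/I,A/(x))$ and then compare lengths. Your grade-versus-depth remark is harmless, since Rees' lemma holds in every degree and so either reading of $t$ works.

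The only inaccuracy is in your optional sketch of Rees' lemma. There, $\Ext^i_A(M,\bar A)$ and $\Ext^i_{\bar A}(M,\bar A)$ are not isomorphic up to a shift in general (take $A=k[[x]]$, $M=k$). The standard proof instead identifies $\Hom_{\bar A}(M,\bar A)\cong\Ext^1_A(M,A)$ and uses that $\Ext^{i+1}_A(-,A)$ and $\Ext^i_{\bar A}(-,\bar A)$ both vanish on free $\bar A$-modules for $i\ge 1$. Since you, like the paper, can simply cite the lemma, nothing in your argument depends on this.
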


\begin{proof}
    Since $x\in I$ is a non-zero divisor of $A$, depth$(A/(x))=t-1$. From Definition \ref{def}, $$\type_{I/(x)}(A/(x))=\lambda_{A/(x)}\left(\Ext^{t-1}_{A/(x)}(A/I,A/(x))\right)\cong \lambda_{A}\left(\Ext^{t}_{A}(A/I,A)\right)=\type_I(A).$$ The isomorphism is due to \cite[Lemma 3.1.16]{Bruns}.
\end{proof}

In the following theorem, we establish a lower bound for $\type_{I}(A).$

\begin{theorem}\label{bd}
   Let $(A,\m)$ be a Cohen-Macaulay local ring of dimension $d$ and let $I$ be an $\m$-primary integrally closed ideal. Let $J$ be a minimal reduction of $I.$ If $e_{2}(I)=e_{1}(I)-e_{0}(I)+\lambda(A/I) \neq 0$ then $$\type_I(A) \geq e_{0}(I)-\lambda(A/I)-\lambda(I/I^{2}+J)\geq 0.$$
\end{theorem}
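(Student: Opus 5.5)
The plan is to reduce everything to the Artinian ring $A/J$. There both sides of the inequality become lengths of ideals of $A/J$, and the hypothesis $e_2(I)=e_1(I)-e_0(I)+\lambda(A/I)$ should yield the containment $I^3\subseteq J$.

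\emph{Step 1 (rewrite $\type_I(A)$).} Let $J=(x_1,\ldots,x_d)$ be the minimal reduction. Since $A$ is Cohen--Macaulay and $J$ is $\m$-primary, $x_1,\ldots,x_d$ is an $A$-regular sequence contained in $I$. Apply Proposition \ref{type} $d$ times, each time passing to $A/(x_1,\ldots,x_i)$ with the image of $I$. This gives
\[
\type_I(A)=\type_{I/J}(A/J)=\lambda\bigl(\Hom_{A/J}(A/I,A/J)\bigr)=\lambda\bigl((J:I)/J\bigr).
\]
When $d=0$ we have $J=0$, and the same formula reads $\lambda(0:I)$.

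\emph{Step 2 (rewrite the right-hand side).} Because $A$ is Cohen--Macaulay, $e_0(I)=\lambda(A/J)$. Hence
\[
e_0(I)-\lambda(A/I)-\lambda(I/I^2+J)=\lambda(I/J)-\lambda(I/I^2+J)=\lambda\bigl((I^2+J)/J\bigr)\geq 0,
\]
which already proves the second inequality. The first inequality therefore reduces to the containment $I^2+J\subseteq J:I$. We have $JI\subseteq J$ trivially, so it remains to show $I^3\subseteq J$.

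\emph{Step 3 (show $I^3\subseteq J$).} This is the main point, and it is where the hypothesis enters.
\begin{itemize}
\item If $d\geq 2$, argue as in the proof of Proposition \ref{e3bd}. By \cite[Equation 3.1]{rossi2010hilbert}, $e_1(I)=\sum_{j\ge0}\wt{\nu_j}(I)$ and $e_2(I)=\sum_{j\ge1}j\wt{\nu_j}(I)$. Since $I=\wt I$, we get $\wt{\nu_0}(I)=\lambda(I/J)=e_0(I)-\lambda(A/I)$. The hypothesis then forces $\sum_{j\ge2}(j-1)\wt{\nu_j}(I)=0$, so $\wt{\nu_2}(I)=0$, that is, $\wt{I^3}=J\wt{I^2}$. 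It follows that $I^3\subseteq \wt{I^3}=J\wt{I^2}\subseteq J$.
\item If $d=1$, the proof of Proposition \ref{prop4.1} gives $\nu_2(I)=0$, so $I^3=JI^2\subseteq J$.
\item If $d=0$, Proposition \ref{prop4.1} gives $\deg h_I(t)\le2$, so $\rho_j(I)=0$ for $j\ge 3$. Hence $I^3=I^4=\cdots$, and since $I$ is nilpotent in the Artinian ring, $I^3=0=J$.
\end{itemize}
The only delicate point is checking that the Ratliff--Rush formulas for $e_1$ and $e_2$ apply in the relevant dimensions, which we take from \cite{rossi2010hilbert}.

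\emph{Step 4 (conclude).} Steps 2 and 3 give $(I^2+J)/J\subseteq (J:I)/J$. Taking lengths and combining with Steps 1 and 2 yields $\type_I(A)\geq e_0(I)-\lambda(A/I)-\lambda(I/I^2+J)\geq 0$.
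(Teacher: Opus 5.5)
Your reduction is the same as the paper's. You pass to the Artinian ring $A/J$, where $\type_I(A)=\lambda\bigl((J:I)/J\bigr)$ and the right-hand side equals $\lambda\bigl((I^2+J)/J\bigr)$, so everything comes down to $I^3\subseteq J$. Steps 1, 2 and 4 are correct, and so are the cases $d=0,1,2$ of Step 3. For $d=1$, your use of $\nu_2(I)=0$ is more direct than the paper's detour through the Cohen--Macaulayness of $G(I)$.

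\textbf{The gap is Step 3 for $d\geq 3$.} The identities $e_1(I)=\sum_{j\ge0}\wt{\nu_j}(I)$ and $e_2(I)=\sum_{j\ge1}j\,\wt{\nu_j}(I)$ do not hold in arbitrary dimension. By the Huckaba--Marley criterion, one always has $e_1(I)\le\sum_{j\ge 0}\lambda(\wt{I^{j+1}}/J\wt{I^j})$, with equality if and only if $\depth\wt{G}(I)\ge d-1$.
\begin{enumerate}[(i)]
\item Since $\wt{G}(I)$ always has positive depth, equality is automatic when $d=2$.
\item $d=2$ is the only dimension in which the paper invokes \cite[Equation 3.1]{rossi2010hilbert}. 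In Proposition \ref{e3bd}, the case $d\ge 3$ is handled by cutting down, not by these formulas.
\item For $d\ge3$, $\depth\wt{G}(I)$ can be smaller than $d-1$. In the example following Proposition \ref{e3bd} with $d\ge 4$, $\wt{G}(\m)=G(\m)$ has depth $d-3$.
\end{enumerate}
In that situation your derivation of $\wt{\nu_2}(I)=0$ from the hypothesis has no basis. Deferring the ``delicate point'' to \cite{rossi2010hilbert} does not close it.

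\textbf{The repair is the paper's reduction to dimension two.}
\begin{enumerate}[(i)]
\item For $d\ge3$, take a superficial sequence $x_1,\dots,x_{d-2}$ with $(x_1,\dots,x_{d-2})\subseteq J$ such that $I_{d-2}=IA_{d-2}$ is integrally closed in $A_{d-2}=A/(x_1,\dots,x_{d-2})$ (Itoh, see \ref{superf}).
\item Then $A_{d-2}$ is a two-dimensional Cohen--Macaulay ring, $e_i(I_{d-2})=e_i(I)$ for $i\le 2$, and $\lambda(A_{d-2}/I_{d-2})=\lambda(A/I)$. Also $\wt{I_{d-2}}=I_{d-2}$, which is what your $d=2$ argument really uses.
\item So your $d=2$ argument gives $I_{d-2}^3\subseteq JA_{d-2}$. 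This lifts to $I^3\subseteq J$ because $(x_1,\dots,x_{d-2})\subseteq J$.
\end{enumerate}
As in the paper, you may choose $J$ generated by such a sequence, and this costs nothing. $\type_I(A)$ does not involve $J$. Also, since $I^2\cap J=JI$ for integrally closed $I$, one has $\lambda(I/I^2+J)=\lambda(I/I^2)-d\,\lambda(A/I)$, which is independent of $J$. With this reduction inserted, the rest of your argument goes through as written.
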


\begin{proof}

    We proceed by induction of the dimension $d.$ Let $d=0.$ Then $J=0.$ By  Proposition \ref{prop4.1}, we have $\deg h_{I}(t) \leq 2.$ Consequently, $I^{3}=0.$ This implies that $\displaystyle I^{2} \subseteq (0:_{A}I)\cong \Hom_{A}(A/I,A).$ If $I^{2}=0,$ then  $\deg h_{I}(t) \leq 1,$ and hence, $e_{2}(I)=0,$ which is a contradiction. Thus, $I^{2}\neq 0,$ and as $\lambda(I^2)=e_{0}(I)-\lambda(A/I)-\lambda(I/I^2),$ we get $\type_I(A) \geq e_{0}(I)-\lambda(A/I)-\lambda(I/I^{2})=e_{0}(I)-\lambda(A/I)-\lambda(I/I^{2}+J).$

    Suppose $d=1.$ From Proposition~\ref{prop4.1}, $G(I)$ is Cohen-Macaulay, therefore, we get 
    \begin{align*} 
        \type_I(A)&=\type_{I_1}(A_1) & \text{ (from Proposition \ref{type})}\\
        &\geq e_{0}(I_{1})-\lambda(A_1/I_{1})-\lambda(I_{1}/I_{1}^{2}) & \text{ (from $d=0$ case)}\\
        &=e_{0}(I)-\lambda(A/I)-\lambda(I/I^{2}+J) & \text{  (from \cite[Proposition 1.2]{rossi2010hilbert})}.
    \end{align*}

    Let $d \geq 2.$  We choose $x_{1},\ldots,x_{d} \in I\setminus I^2$  a superficial sequence for $I$ such that $J=(x_{1},\ldots,x_{d})$ is a minimal reduction of $I$. For $s=1,\ldots ,d$,  set $A_s=A/(x_{1},\ldots,x_{s})$ and $I_s=IA_{s}$. Note that  $I_s$ is integrally closed for all $s=1,\ldots , d-1.$

    Suppose $d=2.$  
Set $\wt{\nu_j}(I)=\lambda(\wt{I^{j+1}}/J\wt{I^j})$ for all $j\geq 0.$ By   \cite[Equation 3.1]{rossi2010hilbert}, we have $ \displaystyle e_2(I)=\sum_{j\geq 1}j\wt{\nu_j}(I)$ and $\displaystyle e_1(I)=\sum_{j\geq 0}\wt{\nu_j}(I).$ Since $e_{2}(I)=e_{1}(I)-e_{0}(I)+\lambda(A/I)$, we obtain $\wt{\nu_j}(I)=0$ for all $j\geq 2$. Therefore, $I^3\subseteq\wt{I^3}=J\wt{I^2}.$ Going module $J$, we get $I^3_d=0$ in $A_d.$ Furthermore, if $I^2_d=0$, then $I^2\subseteq J.$ Since $I$ is integrally closed, we have $I^2\cap J=JI$. It follows that $I^2=JI$. Therefore, from \cite[Theorem 16]{HCCMM}, $e_{0}(I)-\lambda(A/I)=e_{1}(I),$ this implies $e_{2}(I)=0,$ which is a contradiction. Thus, $I^2_d\neq0$. Now since $I^2_d \subseteq (0:_{A_{d}}I_{d})$ and $\type_I(A)=\type_{I_d}(A_d)$, therefore
 $\type_I(A) \geq e_{0}(I)-\lambda(A/I)-\lambda(I/I^{2}+J).$

    Suppose $d\geq 3.$ From Proposition \ref{type} and \cite[Proposition 1.2]{rossi2010hilbert}, we have $\type_I(A)=\type_{I_{d-2}}(A_{d-2})$ and $e_i(I)=e_i(I_{d-2})$ for $i=0,1,2$, respectively. Moreover, $\lambda(A/I)$ and $\lambda(I/I^2+J)$ remain unchanged upon passing modulo a superficial sequence. Thus, by $d=2$ case, $\type_I(A) \geq e_{0}(I)-\lambda(A/I)-\lambda(I/I^{2}+J).$

Since $e_0(I)=\lambda(A/J)$, we have $\type_I(A)\geq \lambda(I/J)-\lambda(I/I^{2}+J)\geq 0.$ This proves the last inequality. 
\end{proof}

The following remark is stated in \cite[3.1]{ankit} for the maximal ideals. However, the same argument works for any Ratliff-Rush closed ideal. We therefore state the following in this generality.

\begin{remark} \label{imprmk} \normalfont 
    Let $(A,\m)$ be a local ring, and let $I$ be a Ratliff-Rush-closed ideal. Let $x\in I$ be a superficial element for $I$. Set $A_{1}=A/(x)$ and $I_1=I/(x).$ Then we have the following:
    \begin{enumerate}[{(i)}]
        \item If $\dim A_{1}>0,$ then  we have the following inclusion: $0\longrightarrow \wt{I^2}/I^2\longrightarrow \wt{I_1^2}/I_1^2.$
        \item If $\dim A_{1}=0,$ then  we have the following inclusion map: $\psi:\wt{I^2}/I^2\longrightarrow {I_1}/I_1^2.$
    \end{enumerate}
\end{remark}
In the next result, we discuss the consequence when $\type_I(A) = e_{0}(I)-\lambda(A/I)-\lambda(I/I^{2}+J)$ in Theorem \ref{bd}.  

\begin{theorem}
    \label{typeq}
    (with the same hypothesis as in Theorem \ref{bd}) If $\type_I(A) = e_{0}(I)-\lambda(A/I)-\lambda(I/I^{2}+J)$ then $G(I)$ is Cohen-Macaulay.
\end{theorem}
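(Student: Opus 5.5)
The plan is to reduce to dimension two and show that the equality in Theorem \ref{bd} forces $\wt{I^n}=I^n$ for all $n$. Lemma \ref{lemma4.4} and Sally's descent then give the Cohen-Macaulayness of $G(I)$.

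\textbf{Reduction to $d=2$.} For $d\leq 1$ there is nothing to prove, by Proposition \ref{prop4.1}. For $d\geq 3$, choose a superficial sequence $x_1,\ldots,x_{d-2}\in I\setminus I^2$ such that each $I_s$ is integrally closed, as in the proof of Theorem \ref{bd}. The following quantities are unchanged in passing to $A_{d-2}$:
\begin{enumerate}[\normalfont(a)]
\item $\type_I(A)$, by Proposition \ref{type};
\item $e_0,e_1,e_2$, by \cite[Proposition 1.2]{rossi2010hilbert};
\item $\lambda(A/I)$ and $\lambda(I/I^2+J)$.
\end{enumerate}
Hence both the hypothesis and the equality hold for $I_{d-2}$ in the two-dimensional ring $A_{d-2}$. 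Once we know $G(I_{d-2})$ is Cohen-Macaulay, repeated application of Sally's descent lifts this to $G(I)$. So it suffices to treat $d=2$.

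\textbf{Translating the equality in $d=2$.} Let $J=(x_1,x_2)$ and $A_2=A/J$. The proof of Theorem \ref{bd} shows that $I_2^3=0$, so $I_2^2\subseteq (0:_{A_2}I_2)$. It also shows that
\[
\lambda(I_2^2)=e_0(I)-\lambda(A/I)-\lambda(I/I^2+J).
\]
Moreover, $\type_I(A)=\type_{I_2}(A_2)=\lambda(0:_{A_2}I_2)$. Therefore equality in Theorem \ref{bd} is exactly the statement $(0:_{A_2}I_2)=I_2^2$, i.e.
\[
(J:_A I)=I^2+J .
\]

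\textbf{Showing $\wt{I^2}=I^2$ (main obstacle).} Take $a\in\wt{I^2}$. Then $aI\subseteq \wt{I^3}$. The $d=2$ step of Theorem \ref{bd} gives $\wt{I^3}=J\wt{I^2}\subseteq J$, so $a\in (J:I)=I^2+J$. Write $a=b+c$ with $b\in I^2$ and $c\in J$. Then $c=a-b\in \wt{I^2}\cap J\subseteq \overline{I^2}\cap J$. Itoh's result for integrally closed $I$ (already used in Theorem \ref{bd}, in the form $I^2\cap J=JI$) should give $\overline{I^2}\cap J=JI$. Hence $c\in JI\subseteq I^2$, and so $a\in I^2$.

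I expect this to be the delicate step. The intersection $\overline{I^2}\cap J=JI$ is the precise input needed, and one must verify that Itoh's theorem provides it for $\overline{I^2}$ and not merely for $I^2$. If this fails, the fallback is to run the same argument with $\wt{I^2}\cap J$ directly, using Remark \ref{imprmk}(ii) to control $\wt{I^2}/I^2$ inside $I_1/I_1^2$ after reducing to the Artinian ring.

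\textbf{Finishing.} Suppose $\wt{I^n}=I^n$ for some $n\geq 2$. Since $\wt{\nu_j}(I)=0$ for $j\geq 2$, we get
\[
\wt{I^{n+1}}=J\wt{I^n}=JI^n\subseteq I^{n+1},
\]
so $\wt{I^{n+1}}=I^{n+1}$. By induction all powers of $I$ are Ratliff-Rush closed, hence $\depth G(I)>0$. Lemma \ref{lemma4.4} then gives $\depth G(I)=2$, so $G(I)$ is Cohen-Macaulay.
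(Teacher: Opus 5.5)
Your proof is correct. The reduction to $d=2$ and the final induction $\wt{I^{n+1}}=J\wt{I^n}=JI^n$ match the paper, but you obtain the key step $\wt{I^2}=I^2$ by a different and more direct route.

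\textbf{How the paper does it.} The paper descends through the superficial sequence:
\begin{enumerate}
\item It invokes Puthenpurakal's result that $\wt{G}(I)$ is Cohen--Macaulay and that $\wt{I^j}A_1=\wt{I_1^j}$.
\item It pushes $\wt{I^3}=J\wt{I^2}$ into $A_1$ and then $A_2$, which gives $I_2^2\subseteq\wt{I_1^2}A_2\subseteq(0:_{A_2}I_2)$.
\item It uses the equality to force $I_2^2=\wt{I_1^2}A_2$.
\item It lifts back with the two injections of Remark~\ref{imprmk}: first $\wt{I_1^2}=I_1^2$, then $\wt{I^2}=I^2$.
\end{enumerate}

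\textbf{How you do it.} You stay in $A$ throughout:
\begin{enumerate}
\item You read the equality as $(J:_AI)=I^2+J$.
\item You observe $\wt{I^2}\,I\subseteq\wt{I^3}=J\wt{I^2}\subseteq J$, so $\wt{I^2}\subseteq I^2+J$.
\item You kill the $J$-component via $\wt{I^2}\cap J\subseteq\overline{I^2}\cap J=JI$.
\end{enumerate}

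The step you flagged as delicate is fine, and no fallback is needed. The Huneke--Itoh theorem states $\overline{J^{n+1}}\cap J^n=J^n\overline{J}$ for $J$ generated by a regular sequence. Since $J$ is a reduction of $I$ and $I$ is integrally closed, $\overline{J^2}=\overline{I^2}$ and $\overline{J}=I$. Hence $\overline{I^2}\cap J=JI$. The fact $I^2\cap J=JI$ that the paper uses in Theorem~\ref{bd} is usually deduced exactly this way, so you are not assuming anything beyond what the paper already uses.

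\textbf{Comparison.} Your argument is shorter and bypasses Puthenpurakal's good-behaviour results and Remark~\ref{imprmk} entirely. The paper's argument keeps everything within the Ratliff--Rush and superficial-element framework it reuses in Section~4.

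As a side observation, your argument shows that $\wt{I^2}\cap J=JI$ holds automatically for every integrally closed $I$. This bears on the hypothesis imposed in Proposition~\ref{depth} and Theorem~\ref{e3=1}.
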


\begin{proof}
    If $d\leq 1$ then by Proposition \ref{prop4.1}, $G(I)$ is Cohen-Macaulay.
Assume $d \geq 2.$ We choose $x_{1},\ldots,x_{d} \in I\setminus I^2$  a superficial sequence for $I$ such that $J=(x_{1},\ldots,x_{d})$ is a minimal reduction of $I$. For $s=1,\ldots ,d$,  set $A_s=A/(x_{1},\ldots,x_{s})$ and $I_s=IA_{s}$. Note that  $I_s$ is integrally closed for all $s=1,\ldots , d-1.$

Suppose $d=2.$  From \cite[Proposition 3.4]{Part2}, we have $\wt{G}(I)$ is Cohen-Macaulay and ${\wt{I^j}}A_{1}=\wt{I^j_1}$ 
for all $j\geq 1.$ From the proof of Theorem \ref{bd}, we get $\wt{I^{j+1}}=J\wt{I^j}$ for all $j\geq 2.$ This implies that $I^3_2=0$ in $A_2.$  Since, $\wt{I^3}=J\wt{I^2}$, we get $ {I\wt{I^2}}A_{1}\subseteq {\wt{I^3}}A_{1}={J\wt{I^2}}A_{1} ={J}{\wt{I^2_1}}A_{1} \subseteq (x_2)A_1.$
This implies that $I_1\wt{I^2_1}\subseteq \wt{I^3_1}\subseteq (x_2)A_1.$
Going modulo $x_2$, we get $I_2{\wt{I^2_1}}A_{2}=0$ which implies $ I^2_2\subseteq {\wt{I^2_1}}A_{2}\subseteq (0:_{A_2}I_2).$

Thus, $ e_{0}(I)-\lambda(A/I)-\lambda(I/I^{2}+J)=\lambda(I^2_2)\leq \type_{I_2}(A_2).$ By the hypothesis, we have $I^2_2={\wt{I^2_1}}A_{2}.$ 
 From  Remark \ref{imprmk} (ii) and (i), we obtain $I_{1}^{2}=\widetilde{I_{1}^{2}}$ and  $I^{2}=\widetilde{I^{2}}$ respectively. Consequently,
$$I^{3}\subseteq \widetilde{I^{3}}=J\widetilde{I^{2}}=JI^{2}\subseteq I^{3},$$
and hence $I^{3}=\widetilde{I^{3}}$. Iterating this process, we conclude that
$I^{j}=\widetilde{I^{j}} \quad \text{for all } j\geq 2.$
Since $I$ is an integrally closed ideal, it follows that $\depth G(I)>0$.
Therefore, by Lemma~\ref{lemma4.4}, the associated graded ring $G(I)$ is
Cohen-Macaulay.

  Suppose $d\geq 3.$ From Proposition \ref{type} and \cite[Proposition 1.2]{rossi2010hilbert}, we have $\type_I(A)=\type_{I_{d-2}}(A_{d-2})$ and $e_i(I)=e_i(I_{d-2})$ for $i=0,1,2$. Moreover, $\lambda(A/I)$ and $\lambda(I/I^2+J)$ do not change upon passing modulo a superficial sequence. Therefore, the result follows from the case $d=2$ and Sally's descent. 
\end{proof}

\begin{remark}  \normalfont 
\begin{enumerate} [(i)]
    \item \textit{(with the same hypothesis as in Theorem \ref{bd})}
   Theorems \ref{bd} and \ref{typeq} are generalizations of \cite[Proposition~2.21 and Theorem~3.1]{ankit} to the class of integrally closed ideals. Indeed, when $I=\mathfrak m$, we have
$\lambda(A/\mathfrak m)=1$ and $$\lambda(\mathfrak m/\mathfrak m^2+J)
= \lambda(\mathfrak m/\mathfrak m^2)
- \lambda((\mathfrak m^2+J)/\mathfrak m^2)
= \mu(\mathfrak m)-d
= h.$$

\item  \textit{(with the same hypothesis as in Theorem \ref{typeq})}  If $e_{2}(I)=e_{1}(I)-e_{0}(I)+\lambda(A/I) \neq 0$ and $\type_I(A) = e_{0}(I)-\lambda(A/I)-\lambda(I/I^{2}+J)$ then $$h_I(t)=\lambda(A/I)+\lambda(I/I^2+J)t+(e_0(I)-\lambda(A/I)-\lambda(I/I^2+J))t^2.$$
\end{enumerate}
\end{remark}

The following example illustrates Theorem \ref{typeq}. The authors gratefully acknowledge Prof. Clare D'Cruz and Dr. Sudeshna Roy for their assistance with the computations for this example using Macaulay 2 \cite{macaulay2} and CoCoA \cite{cocoa}.

\begin{example} \normalfont
   Let $A=\mathbb Q[[X,Y,Z]]/(X^4+Y^7+Z^7),$ where $X,Y,Z$ are variables and   let $x,y,z$ denote the images of $X,Y,Z$ in $A.$ Let $I= \overline{(y^2,z^2)}=(x^2,y^2,z^2,xy,yz,zx)$. Note that  $A$ is a Cohen-Macaulay ring of dimension 2 and $\depth G(I)=2.$ By CoCoA \cite{cocoa}, the Hilbert series of $I$ is:
   \[HS_{I}(t)=\frac{4 + 8t + 4t^2}{(1-t)^{2}}.\]

   Hence, $e_{0}(I)=16$, $e_{1}(I)=16$ and $e_{2}(I)=4.$ In particular, $e_{2}(I)=e_{1}(I)-e_{0}(I)+\lambda(A/I).$ Here $J=(y^2,z^2)$ is a minimal reduction of $I$ as $JI^2=I^3$ and $\lambda(A/J)=16=e_{0}(I)$ 
with   $\lambda(I/I^2+J)=8.$  Note that $\type_{I}(A)= e_{0}(I)-\lambda(A/I)-\lambda(I/I^2+J)=4.$ The computation of $\type_{I}(A)$
 was carried out using the following Macaulay 2 \cite{macaulay2} commands.  
\begin{lstlisting}
i1 : R = QQ[x,y,z];
i2 : K = ideal(x^4+y^7+z^7);
i3 : S = R/K;
i4 : I = ideal(x^2,y^2,z^2,x*y,y*z,z*x);
i5 : M = cokernel gens I;
i6 : N = Ext^2(M,S);
i7 : degree N
o7 = 4
\end{lstlisting}
\end{example}

We conclude this section by extending Itoh’s result \cite{itoh1} to the maximal ideal of Buchsbaum local rings with depth at least $d-1$, obtaining a similar lower bound for the second Hilbert coefficient. Before, we briefly recall the construction of the  $S_{2}$-ification of a Buchsbaum ring from the literature. 

\begin{point} \normalfont \cite[Section 4]{GHEZZI_GOTO_HONG_VASCONCELOS_2017} \textbf{$S_{2}$-fication of  Buchsbaum local rings:}   Let $A$ be a Noetherian ring with the total ring of fractions $\mathbb K$. The smallest finite extension $\phi: A\longrightarrow \mathbb S \subseteq \mathbb K$ satisfying Serre's condition $(S_{2})$ is called the $S_{2}$-fication of $A$. Let $(A,\m)$ be a Buchsbaum local ring with positive depth.  Then from \cite[Theorem 4.2]{GHEZZI_GOTO_HONG_VASCONCELOS_2017}, $\mathbb S=\Hom_{A}(\m,\m)$ is the $S_{2}$-fication of $A.$ From \cite[Proposition 4.1(2)]{GHEZZI_GOTO_HONG_VASCONCELOS_2017},  $\mathbb S$ is semi-local, therefore, let $\{\mathfrak M_{1},\ldots, \mathfrak M_{l}\}$ be the maximal ideals of $\mathbb S,$ and let $\mathbb S_{1},\ldots,\mathbb S_{l}$ denote the corresponding localizations. Further, for each $1\leq i\leq l,$ let $f_{i}$ be the relative degree $[\mathbb S/\mathfrak M_{i}:A/\mathfrak m].$
\end{point}

\begin{proposition} \label{buchsbaumring}
    Let $(A,\m)$ be a Buchsbaum local ring of dimension $d \geq 2$ with $\depth A\geq d-1$, then $e_{2}(\m)\geq e_{1}(\m)-e_{0}(\m)+1.$
\end{proposition}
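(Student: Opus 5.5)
The plan is to reduce to dimension two by cutting down with regular superficial elements, and then, in the genuinely non-Cohen--Macaulay case, to pass to the $S_{2}$-fication $\mathbb S=\Hom_A(\m,\m)$ and apply Itoh's inequality there.

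First, if $\depth A=d$ then $A$ is Cohen--Macaulay and $\m$ is integrally closed. Itoh's inequality \cite{itoh1} then gives $e_{2}(\m)\geq e_{1}(\m)-e_{0}(\m)+\lambda(A/\m)=e_1(\m)-e_0(\m)+1$ directly. So assume $\depth A=d-1\geq 1$.

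If $d\geq 3$, choose $x\in\m\setminus\m^2$ superficial for $\m$. Since $\depth A\geq 1$, $x$ is $A$-regular. Then $A_1=A/(x)$ has the following properties:
\begin{enumerate}[(a)]
\item it is again Buchsbaum, since $x$ is part of a system of parameters;
\item $\depth A_1=d-2=\dim A_1-1$;
\item its maximal ideal is $\m A_1$;
\item by \cite[Proposition 1.2]{rossi2010hilbert}, $e_i(\m)=e_i(\m A_1)$ for $i\leq d-1$, and in particular for $i=0,1,2$.
\end{enumerate}
Iterating, we reduce to $d=2$ and $\depth A=1$.

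In this case let $\mathbb S=\Hom_A(\m,\m)$ be the $S_2$-fication. It is a finite birational extension of $A$ satisfying $(S_2)$ in dimension two, hence a Cohen--Macaulay semilocal ring and a maximal Cohen--Macaulay $A$-module. Since $A$ is Buchsbaum, $\mathbb S/A\cong H^1_\m(A)$ is a finite-length $k$-vector space. The key observation is that $\m\mathbb S=\m$, because $\mathbb S\m\subseteq\m$. Therefore $\m^n\mathbb S=\m^n$ for all $n\geq1$, and
\[
\lambda_A(A/\m^{n+1})=\lambda_A(\mathbb S/\m^{n+1}\mathbb S)-\lambda_A(\mathbb S/A)\quad\text{for all } n\geq 0 .
\]
Writing $e_i(\m\mathbb S)$ for the Hilbert coefficients of the $A$-module $\mathbb S$ with respect to $\m$, this gives
\[
e_0(\m)=e_0(\m\mathbb S),\qquad e_1(\m)=e_1(\m\mathbb S),\qquad e_2(\m)=e_2(\m\mathbb S)-\lambda_A(\mathbb S/A).
\]
Moreover $\lambda_A(\mathbb S/\m\mathbb S)=\lambda_A(\mathbb S/\m)=\lambda_A(\mathbb S/A)+1$. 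Hence it suffices to prove the Itoh-type inequality
\[
e_2(\m\mathbb S)\geq e_1(\m\mathbb S)-e_0(\m\mathbb S)+\lambda_A(\mathbb S/\m\mathbb S)
\]
for the Cohen--Macaulay ring $\mathbb S$ and its ideal $\m\mathbb S$. Substituting back, the $\lambda_A(\mathbb S/A)$ terms cancel and we obtain $e_2(\m)\geq e_1(\m)-e_0(\m)+1$.

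To establish that inequality, I would localize at the maximal ideals $\mathfrak M_1,\ldots,\mathfrak M_l$ of $\mathbb S$. Each $\mathbb S_i$ is a two-dimensional Cohen--Macaulay local ring, and $\m\mathbb S_i$ is $\mathfrak M_i\mathbb S_i$-primary. For a finite-length $\mathbb S$-module $M$ one has $\lambda_A(M)=\sum_i f_i\,\lambda_{\mathbb S_i}(M_{\mathfrak M_i})$, so every $e_j(\m\mathbb S)$ and $\lambda_A(\mathbb S/\m\mathbb S)$ splits as the corresponding $f_i$-weighted sum over the local rings $\mathbb S_i$. It therefore suffices to apply Itoh's inequality to each ideal $\m\mathbb S_i$ in $\mathbb S_i$ and add up.

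The main obstacle is verifying that $\m\mathbb S_i$ is integrally closed in $\mathbb S_i$, which is what Itoh's theorem requires. I would first try to show that $\m$ is integrally closed as an ideal of $\mathbb S$. Suppose $s\in\mathbb S$ is integral over $\m\mathbb S=\m$. Then $s\m$ is integral over $\m^2\subseteq\m$, which should force $s\m\subseteq\m$, and hence $s\in\m:_{\mathbb K}\m$. One then needs to rule out units of $\mathbb S$ and conclude $s\in\m$. This step uses that $\m$ is contained in the Jacobson radical of $\mathbb S$, and that an element integral over an ideal inside the radical lies in every maximal ideal. If this direct argument fails, I would instead invoke the form of Itoh's inequality with $\lambda(\mathbb S/\overline{\m\mathbb S})$ in place of $\lambda(\mathbb S/\m\mathbb S)$, and check that $\overline{\m\mathbb S}=\m$ using that $\m$ is the conductor-type ideal $A:_{\mathbb K}\mathbb S$ contained in $\operatorname{rad}\mathbb S$.
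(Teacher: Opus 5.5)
Your reductions are correct and follow the paper's route. You cut down to $d=2$, $\depth A=1$; you get $\m\mathbb S=\m$ directly from $\mathbb S=\m:_{\mathbb K}\m$; you derive $e_2(\m)=e_2(\m\mathbb S)-\lambda_A(\mathbb S/A)$ from a length identity, where the paper uses a cited lemma plus $\lambda_A(\mathbb S/A)=-e_1(J)$; and you split over the $\mathbb S_i$.

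\textbf{The gap.} It is the step you flag as the main obstacle, and neither suggested argument closes it. An element of $\mathbb S$ integral over $\m\mathbb S$ is only forced into $\operatorname{rad}\mathbb S$, which can be much larger than $\m$. Being the conductor $A:_{\mathbb K}\mathbb S$ says nothing about integral closedness in $\mathbb S$. In fact the step is false, and so is the statement.

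\textbf{Counterexample.} Let $k$ be an infinite field, $B=k[[s,t]]$, $\mathfrak n=(s,t)B$, $\mathfrak a=(s^4,s^3t,st^3,t^4)B$ and $A=k+\mathfrak a$.
\begin{enumerate}[(i)]
\item $A$ is a two-dimensional Noetherian local domain with $\m=\mathfrak a$, $B$ is finite over $A$, and $\mathbb S=\m:_{\mathbb K}\m=B$ because $B$ is normal.
\item For any system of parameters $x_1,x_2\in\m$, the sequence is $B$-regular and $x_1B\subseteq(x_1A:_A\m)\subseteq(x_1A:_Ax_2)\subseteq x_1B$. The first inclusion holds since $\m\cdot x_1B=x_1\m\subseteq x_1A$, the last since $x_1,x_2$ is $B$-regular. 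Hence $A$ is Buchsbaum of depth $1$, with $\lambda(\mathbb S/A)=\lambda(B/\mathfrak a)-1=10$.
\item Since $\mathfrak a^2=\mathfrak n^8$, the element $s^2t^2\in\mathbb S$ is integral over $\m\mathbb S=\mathfrak a$ but $s^2t^2\notin\m$. Both the integral closure and the Ratliff--Rush closure of $\m\mathbb S$ in $\mathbb S$ equal $\mathfrak n^4$.
\item Since $\m^{n+1}=\mathfrak n^{4n+4}$ for $n\geq 1$, we get $\lambda(A/\m^{n+1})=\binom{4n+5}{2}-10=8n^2+18n$. Hence $e_0(\m)=16$, $e_1(\m)=6$ and $e_2(\m)=-10<-9=e_1(\m)-e_0(\m)+1$.
\end{enumerate}

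The paper's proof has the same hole. Inequality \eqref{in6} applies Itoh's bound to $\m\mathbb S_i\subseteq\mathbb S_i$, justified only by $\mathbb S_i$ being two-dimensional Cohen--Macaulay. In the example above ($l=1$, $f_1=1$) it would read $0\geq 6-16+11$.

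\textbf{What the reduction does prove.} On each $\mathbb S_i$, the dimension-two Ratliff--Rush formulas $e_1=\sum_{j\geq0}\wt{\nu_j}$ and $e_2=\sum_{j\geq1}j\wt{\nu_j}$ need no closedness hypothesis. They give $e_2-e_1+e_0=\sum_{j\geq2}(j-1)\wt{\nu_j}+\lambda(\mathbb S_i/\wt{\m\mathbb S_i})$. Summing with the weights $f_i$, your argument yields
\[
e_2(\m)\geq e_1(\m)-e_0(\m)+1-\lambda_A(\wt{\m\mathbb S}/\m),
\]
where $\wt{\m\mathbb S}$ is the Ratliff--Rush closure of $\m\mathbb S$ in $\mathbb S$; this is an equality in the example. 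So the stated bound follows once you add a hypothesis such as $\m\mathbb S$ being Ratliff--Rush closed in $\mathbb S$. For instance, $\m\mathbb S_i$ integrally closed in every $\mathbb S_i$ suffices, as in the paper's example $\mathbb Q[[X,Y,Z,W]]/(X,Y)\cap(Z,W)$. Without such a hypothesis the statement cannot be proved, since it is false.
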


\begin{proof}
  We prove by induction on the dimension $d.$ For $d=2$,  let $\mathbb{S}$ be the $S_{2}$-fication of $A.$  Using \cite[Proposition 4.1]{GHEZZI_GOTO_HONG_VASCONCELOS_2017}  and  from \cite[Chapter 2, Proposition 2.1$(iii)$]{stückrad2014buchsbaum}, $\mathfrak{m} H
    ^{1}_{\mathfrak{m}}(A) = 0$, we have that $\mathfrak m =\mathfrak m\mathbb S.$ Thus, from \cite[Lemma 4.4]{mandal2024}, $e_{i}(\m)=e_{i}^{A}(\m\mathbb S)$ for $i=0,1$ and $e_{2}(\m)=e_{2}^{A}(\m \mathbb S)+e_{1}(J),$ where $J$ is any minimal reduction of $\m.$ Therefore,
    \begin{align}
        \nonumber
        e_{2}(\m)&=e_{2}^{A}(\m \mathbb S)+e_{1}(J)\\ \nonumber
        &=\sum_{i=1}^{l}e_{2}^{\mathbb S_{i}}(\m \mathbb S_{i})f_{i}+e_{1}(J) & \text{ (from \cite[Lemma 4.2$(iii)$]{mandal2024})}\\ \label{in6}
        &\geq \sum_{i=1}^{l}\left[e_{1}^{\mathbb S_{i}}(\m \mathbb S_{i})-e_{0}^{\mathbb S_{i}}(\m \mathbb S_{i})+\lambda_{\mathbb S_{i}}(\mathbb S_{i}/\m \mathbb S_{i})\right]f_{i}+e_{1}(J)\\ \nonumber
        &=\sum_{i=1}^{l}e_{1}^{\mathbb S_{i}}(\m \mathbb S_{i})f_{i}-\sum_{i=1}^{l}e_{0}^{\mathbb S_{i}}(\m \mathbb S_{i})f_{i}+\sum_{i=1}^{l}\lambda_{\mathbb S_{i}}(\mathbb S_{i}/\m \mathbb S_{i})f_{i}+e_{1}(J)\\ \nonumber
        &=e_{1}^{A}(\m \mathbb S)-e_{0}^{A}(\m \mathbb S)+\lambda_{A}(\mathbb S/\m \mathbb S)+e_{1}(J) & \text{ (from \cite[Lemma 4.2$(i)$ and $(ii)$]{mandal2024})}\\ \nonumber
        &=e_{1}(\m)-e_{0}(\m)+\lambda(A/\m )+\lambda_{A}(\mathbb S/A)+e_{1}(J)& \text{ (since $\m=\m \mathbb S $)}\\  \label{in7}
        &=e_{1}(\m)-e_{0}(\m)+1.
    \end{align}

     Note that the inequality in (\ref{in6})  holds since  $\mathbb S_{i}$ is Cohen- Macaulay of dimension 2 for all $1 \leq i \leq l,$ thus,  $e_{2}^{\mathbb S_{i}}(\m \mathbb S_{i})\geq e_{1}^{\mathbb S_{i}}(\m \mathbb S_{i})-e_{0}^{\mathbb S_{i}}(\m \mathbb S_{i})+1$ for all $1 \leq i \leq l$ from \cite{itoh1}. Furthermore, (\ref{in7}) holds since  $\lambda_{A}(\mathbb S/A)=\lambda\left(H^{1}_{\mathfrak m}(A)\right)=-e_{1}(J)$ because $A$ is Buchsbaum \cite[
         Chapter 1, Propositions 2.6 and 2.7]{stückrad2014buchsbaum}.

      Assume $d \geq 3$ and that our assertion holds for $d-1.$  Let $x$ be a superficial element for $\m.$ Set $A_{1}=A/(x)$ and $\m_{1}=\m A_{1}.$ Then $A_{1}$ is a Buchsbaum ring of dimension $d-1$ with $\depth A_{1} \geq d-2.$ Thus, from induction hypothesis, we have $e_{2}(\m_{1})\geq e_{1}(\m_{1})-e_{0}(\m_{1})+1.$ Further, from \cite[Proposition 1.2]{rossi2010hilbert}, we have $e_{i}(\m)=e_{i}(\m/(x))$ for all $i=0,1,2$. Therefore, $e_{2}(\m)\geq e_{1}(\m)-e_{0}(\m)+1.$
\end{proof}
 As an immediate consequence, we have the following corollary for two-dimensional Buchsbaum local rings with depth zero.  
 
 \begin{corollary} \label{corobuch}
     Let $(A,\m)$ be a two-dimensional Buchsbaum ring with depth zero, then $e_{2}(\m)\geq e_{1}(\m)-e_{0}(\m)+1+e_{2}(J),$ where $J$ is any parameter ideal in $A.$
 \end{corollary}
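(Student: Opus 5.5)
We redo the $d=2$ computation of Proposition \ref{buchsbaumring} without discarding any term. The difference is that $\depth A=0$, so the $S_{2}$-fication $\mathbb S=\Hom_A(\m,\m)$ of \cite[Section 4]{GHEZZI_GOTO_HONG_VASCONCELOS_2017} is not directly available. We therefore first pass to $B=A/H^{0}_{\m}(A)$.

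\textbf{Step 1: reduce to positive depth.} Since $A$ is Buchsbaum of dimension $2$, the ring $B$ is Buchsbaum of dimension $2$ with $\depth B\geq 1$. Also $H^{0}_{\m}(A)$ has finite length and is killed by $\m$. For large $n$ we have $\m^{n}\cap H^{0}_{\m}(A)=0$, so
\[
\lambda(A/\m^{n+1})=\lambda(B/\m^{n+1}B)+\lambda(H^{0}_{\m}(A)).
\]
Hence $e_{i}(\m)=e_{i}(\m B)$ for $i=0,1$, and
\[
e_{2}(\m)=e_{2}(\m B)+\lambda(H^{0}_{\m}(A)).
\]

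\textbf{Step 2: apply the depth-$\geq 1$ computation to $B$.} We run the chain of equalities in the proof of Proposition \ref{buchsbaumring} for $B$, keeping the term $e_{1}(JB)$ instead of cancelling it. This uses $\m B=\m B\,\mathbb S$ and \cite[Lemma 4.4, Lemma 4.2]{mandal2024}, together with Itoh's inequality \cite{itoh1} on each Cohen-Macaulay localization $\mathbb S_{i}$. Since $\lambda(\mathbb S/B)=\lambda(H^{1}_{\m}(B))=\lambda(H^{1}_{\m}(A))$, this gives
\[
e_{2}(\m B)\geq e_{1}(\m)-e_{0}(\m)+1+\lambda(H^{1}_{\m}(A))+e_{1}(JB).
\]
As in the proof of Proposition \ref{buchsbaumring}, $e_{1}(JB)=-\lambda(H^{1}_{\m}(B))$ because $B$ is Buchsbaum. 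So the $H^{1}$ terms cancel and $e_{2}(\m B)\geq e_{1}(\m)-e_{0}(\m)+1$.

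\textbf{Step 3: identify the correction term.} For a parameter ideal $J$ in a Buchsbaum ring of dimension $2$, \cite[Chapter 1, Propositions 2.6 and 2.7]{stückrad2014buchsbaum} express the Hilbert coefficients of $J$ through local cohomology. The first gives $e_{1}(J)=-\lambda(H^{1}_{\m}(A))$. The second should give $e_{2}(J)=\lambda(H^{0}_{\m}(A))$; the exact sign convention here must be checked. Equivalently, this follows from $\lambda(A/J^{n+1})$ being computed via the Koszul complex, or from Step 1 applied to $J$ together with $e_{2}(JB)=0$, which holds since $B$ has depth $\geq 1$ and is Buchsbaum. Combining with Steps 1 and 2 yields
\[
e_{2}(\m)\geq e_{1}(\m)-e_{0}(\m)+1+e_{2}(J).
\]

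\textbf{Main obstacle.} The delicate points are the precise formula $e_{2}(J)=\lambda(H^{0}_{\m}(A))$ and the bookkeeping of the $H^{0}$ contribution in Step 1 with the sign conventions. I would verify both by computing $\lambda(A/J^{n+1})$ through the surjection $A\to B$. This reduces to $\lambda(H^{0}_{\m}(A)\cap J^{n+1})=0$ for large $n$, and then to the known formula for $B$.
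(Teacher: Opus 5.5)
Your proposal is correct and is the paper's argument: pass to $B=A/H^{0}_{\m}(A)$ so that $e_i(\m)=e_i(\m B)$ for $i=0,1$ and $e_2(\m)=e_2(\m B)+\lambda(H^{0}_{\m}(A))$, apply Proposition \ref{buchsbaumring} to $B$ (your Step 2 just re-runs that proof, which you could cite directly since $\depth B\geq 1=d-1$), and identify $\lambda(H^{0}_{\m}(A))=e_2(J)$ for a parameter ideal of a Buchsbaum ring, which the paper takes from \cite[Corollary 4.2]{MR684272}. The sign you flagged is right: with the convention $(-1)^{d}e_d$ in the Hilbert--Samuel polynomial one gets $e_2(J)=\lambda(H^{0}_{\m}(A))$ and $e_1(J)=-\lambda(H^{1}_{\m}(A))$, as your reduction via $e_2(JB)=0$ also confirms.
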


 \begin{proof}
     Let  $J$ be is any parameter ideal in $A$.
     Set $\overline{A}=A/\left(H_{\mathfrak m}^{0}(A)\right)$  and   $ \overline{\m}=\m\overline{A}.$   Note that $\overline{A}$ is a two-dimensional Buchsbaum ring with positive depth. From \cite[Proposition 2.3]{rossi2010hilbert},  $e_{i}(\overline{\m})=e_{i}(\m)$ for $i=0,1.$ Therefore, from \cite[Proposition 2.3]{rossi2010hilbert}, we have
      \begin{align*}
       e_{2}(\m)&=e_{2}(\overline{\m})+\lambda(H_{\m}^{0}(A))\\
       &\geq e_{1}(\overline{\m})-e_{0}(\overline{\m})+1+\lambda(H_{\m}^{0}(A)) & \text{ (from Proposition \ref{buchsbaumring})}\\
       & = e_{1}(\m)-e_{0}(\m)+1+e_{2}(J) & \text{ (from \cite[Corollary 4.2]{MR684272})}.& \qedhere
      \end{align*}
 \end{proof} 
 The following examples show that the lower bounds in Proposition \ref{buchsbaumring} and Corollary \ref{corobuch}, respectively, are sharp.   
 \begin{example} \normalfont
 \begin{enumerate} [(i)]
     \item \cite[Example 4.7]{GHEZZI_GOTO_HONG_VASCONCELOS_2017}
     Let $A=\mathbb Q[[X,Y,Z,W]]/(X,Y)\cap(Z,W),$ a two-dimensional Buchsbaum ring with positive depth, and let $\m$ denote the maximal ideal of $A.$ Note that the $S_{2}$-fication of $A$ is $\mathbb{S}=\mathbb Q[[z,w]]\oplus \mathbb Q[[x,y]].$  For every $n \geq 0$, we have the following short exact sequence:
\[0\longrightarrow A/\m^{n+1}\longrightarrow  \mathbb Q[[x,y,z,w]]/[(x,y)+\m^{n+1}] \oplus   \mathbb Q[[x,y,z,w]]/[(z,w)+\m^{n+1}]\longrightarrow \mathbb Q \longrightarrow 0.\]
Then for all $n \geq 0, $ we have
\begin{equation*}
    \begin{split}
    \lambda (A/\m^{n+1})&=\lambda\left( \mathbb Q[[x,y,z,w]]/[(x,y)+\m^{n+1}] \right)+\lambda\left( \mathbb Q[[x,y,z,w]]/[(z,w)+\m^{n+1}] \right)-1\\
    &=2\binom{n+2}{2}-1.
\end{split}
\end{equation*}
     Therefore, $e_{0}(\m)=2$, $e_{1}(\m)=0$ and $e_{2}(\m)=-1.$ In particular, one has $e_{2}(\m)=e_{1}(\m)-e_{0}(\m)+1.$

     \item \cite[Remark 2.8]{goto1} Let $A=\mathbb Q[[X,Y,Z]]/(XZ^2,YZ^2,Z^3),$   a two-dimensional Buchsbaum ring with depth zero. Let $\m$ denote the maximal ideal of $A.$ By CoCoA \cite{cocoa}, the Hilbert series of $\m$ is
     \begin{equation*}
         HS_{\m}(t)=\frac{1 + t + t^2 - 2t^3 + t^4}{(1-t)^2}.
     \end{equation*}
     Here $e_{0}(\m)=2$, $e_{1}(\m)=1$, $e_{2}(\m)=1$ and $\lambda\left(H^{0}_{\m}(A)\right)=1=e_{2}(J),$ for any parameter ideal $J$ in $A$. In particular, one has $e_{2}(\m)=e_{1}(\m)-e_{0}(\m)+1+e_{2}(J).$
 \end{enumerate}
 \end{example}

\section[]{ $e_{2}(I)= e_{1}(I)-e_{0}(I)+\lambda(A/I)+1$}

In this section, we study the consequences of the numerical condition $e_{2}(I)=e_{1}(I)-e_{0}(I)+\lambda(A/I)+1$ for $I$ integrally closed. We mostly restrict our attention to  two-dimensional Cohen-Macaulay rings.
We show that if $d \leq 1,$ then this  condition forces the $h$-polynomial of $I$ to have degree exactly 3. We also prove that for $d\geq 2,$ the equality $e_{2}(I)=e_{1}(I)-e_{0}(I)+\lambda(A/I)+1$ implies that $e_{3}(I) \leq 1.$ Moreover, if equality holds, then the Ratliff-Rush filtration with respect to $I$ behaves modulo a superficial sequence of length $d-1.$ 
We begin with the following proposition. 

\begin{proposition} \label{csk}
    Let $(A, \m) $ be a Cohen-Macaulay local ring of dimension less than or equal to $1$ and $I$ an  $\m$-primary integrally closed ideal. If $e_{2}(I)=e_{1}(I)-e_{0}(I)+\lambda(A/I)+1$ then the following hold:
    \begin{enumerate}[\normalfont(i)]
        \item \label{csk1} $\deg h_I(t)= 3.$
        \item \label{csk2} For $d=1$ and $J=(x)$ a minimal reduction ideal of $I$, if $I^3\nsubseteq J,$ then $G(I)$ is Cohen-Macaulay.
    \end{enumerate}
    
\end{proposition}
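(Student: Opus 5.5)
The plan is to use the standard length invariants attached to a minimal reduction and treat $d=0$ and $d=1$ separately, as in Proposition~\ref{prop4.1}.

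For (\ref{csk1}) with $d=0$, I reuse the formulas from the proof of Proposition~\ref{prop4.1}: $e_0(I)=\sum_j\rho_j(I)$, $e_1(I)=\sum_j j\rho_j(I)$ and $e_2(I)=\sum_j\binom{j}{2}\rho_j(I)$. Subtracting gives
$$e_2(I)-e_1(I)+e_0(I)-\lambda(A/I)=\sum_{j\geq 3}\binom{j-1}{2}\rho_j(I).$$
Setting this equal to $1$ forces $\rho_3(I)=1$ and $\rho_j(I)=0$ for $j\geq 4$, because $\binom{j-1}{2}\geq 3$ for $j\geq 4$. Hence $\deg h_I(t)=3$.

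For $d=1$, write $J=(x)$ and $\nu_j(I)=\lambda(I^{j+1}/JI^j)$. The identity in the proof of Proposition~\ref{prop4.1} (from \cite[Theorem 2.5$(c)$]{rossi2010hilbert}) gives $\sum_{j\geq 2}(j-1)\nu_j(I)=1$, so $\nu_2(I)=1$ and $\nu_j(I)=0$ for $j\geq 3$. To read off $h_I(t)$ directly, I use that $x$ is $A$-regular. This gives $\lambda(I^j/I^{j+1})=\lambda(I^j/xI^j)-\lambda(I^{j+1}/xI^j)=e_0(I)-\nu_j(I)$ for $j\geq 1$. Then $h_I(t)=(1-t)\big(\lambda(A/I)+\sum_{j\geq1}(e_0(I)-\nu_j(I))t^j\big)$. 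For $j\geq 2$ the coefficient of $t^j$ is $\nu_{j-1}(I)-\nu_j(I)$. This equals $\nu_2(I)-\nu_3(I)=1$ for $j=3$ and vanishes for $j\geq 4$, so $\deg h_I(t)=3$.

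For (\ref{csk2}), in dimension one $G(I)$ is Cohen-Macaulay iff $\widetilde{I^n}=I^n$ for all $n$. So I must show the Ratliff-Rush filtration is trivial. I will run the same computation with $\widetilde{\nu_j}(I)=\lambda(\widetilde{I^{j+1}}/J\widetilde{I^j})$. By \cite[Equation 3.1]{rossi2010hilbert} these compute $e_1(I)$ and $e_2(I)$ exactly, since $x$ is regular on $\widetilde{G}(I)$. The hypothesis then gives $\widetilde{\nu_2}(I)=1$ and $\widetilde{\nu_j}(I)=0$ for $j\geq3$. Consequently $\widetilde{I^{j+1}}=x\widetilde{I^j}$ for $j\geq 3$, and $\lambda(\widetilde{I^3}/x\widetilde{I^2})=1$. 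Since $I$ is integrally closed, $\widetilde I=I$, and $s^*(I)\leq r_J(I)\leq 3$ by \cite[Proposition 4.2$(i)$]{MR2013172}. Thus only $\widetilde{I^2}=I^2$ and $\widetilde{I^3}=I^3$ need checking.

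This last step is where I expect the main obstacle, and where $I^3\nsubseteq J$ enters. The length-one module $\widetilde{I^3}/x\widetilde{I^2}$ receives the image of $I^3$, which is nonzero because $I^3\nsubseteq xA\supseteq x\widetilde{I^2}$. Hence $\widetilde{I^3}=I^3+x\widetilde{I^2}$. Next I compare $e_1(I)=\sum\nu_j(I)=\sum\widetilde{\nu_j}(I)$ and $\nu_0=\widetilde{\nu_0}$, $\nu_2=\widetilde{\nu_2}=1$, which yields $\nu_1(I)=\widetilde{\nu_1}(I)$. That is, $\lambda(I^2/xI)=\lambda(\widetilde{I^2}/xI)$, so $\widetilde{I^2}=I^2$. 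Then $\widetilde{I^3}=I^3+xI^2=I^3$. Therefore all Ratliff-Rush powers coincide with the ordinary powers, $\depth G(I)>0$, and $G(I)$ is Cohen-Macaulay.

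If the equality $\sum\nu_j(I)=\sum\widetilde{\nu_j}(I)$ proves awkward to justify, I would instead bound $\lambda(\widetilde{I^2}/I^2)$ via Remark~\ref{imprmk}(ii), using the injection $\widetilde{I^2}/I^2\hookrightarrow I_1/I_1^2$.
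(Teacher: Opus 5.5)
Part (i) is correct and follows the paper's argument; your explicit computation of $h_I(t)$ from the $\nu_j(I)$ in dimension one is fine.

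\textbf{The gap is in (ii).} You claim that the $\wt{\nu_j}(I)$ ``compute $e_1(I)$ and $e_2(I)$ exactly.'' This holds for $e_1$ but fails for $e_2$ when $d=1$. There $e_2(I)=h_I''(1)/2$ is not a coefficient of the Hilbert polynomial, and it changes when you pass to the Ratliff--Rush filtration. By \cite[1.5$(b)$]{Part2},
$e_2(I)=\wt{e}_2(I)+\sum_{n\geq 1}\lambda(\wt{I^n}/I^n)$, whereas the $\wt{\nu_j}$ only give $\wt{e}_2(I)=\sum_{j\geq 1}j\wt{\nu_j}(I)$.

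Your identification cannot be right. Together with your correct formulas $e_1(I)=\sum\nu_j(I)=\sum\wt{\nu_j}(I)$ and $e_2(I)=\sum j\nu_j(I)$, it would make every $\m$-primary ideal of a one-dimensional Cohen--Macaulay ring have trivial Ratliff--Rush filtration. The example following Proposition~\ref{prop4.1} refutes this.

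Using $\wt I=I$ and $s^*(I)\leq 3$, the hypothesis actually gives only
$\sum_{j\geq 2}(j-1)\wt{\nu_j}(I)+\lambda(\wt{I^2}/I^2)=1$.
This leaves open exactly the case $\wt{\nu_2}(I)=0$, $\lambda(\wt{I^2}/I^2)=1$, which is the case you must exclude. Your later steps rest on $\wt{\nu_2}(I)=1$: the claim that $\wt{I^3}/x\wt{I^2}$ has length one, and $\nu_2(I)=\wt{\nu_2}(I)$ in the $e_1$ comparison. As written, they are unsupported. Your stated worry was misplaced: the $e_1$ identity is fine, and it is the $e_2$ identity that fails.

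\textbf{The repair needs nothing new.} There are two ways.

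The paper's route uses the corrected identity above. Since $I^3\subseteq\wt{I^3}$ and $I^3\nsubseteq J$, you get $\wt{I^3}\neq J\wt{I^2}$, i.e.\ $\wt{\nu_2}(I)\geq 1$. This forces $\lambda(\wt{I^2}/I^2)=0$.

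A more direct route avoids $e_2$ altogether.
\begin{enumerate}[(1)]
\item $s^*(I)\leq r_J(I)\leq 3$ gives $\wt{I^3}=I^3$.
\item Hence $xI^2\subseteq x\wt{I^2}\subseteq I\wt{I^2}\subseteq \wt{I^3}=I^3$, and $\lambda(I^3/xI^2)=\nu_2(I)=1$ by part (i).
\item Since $I^3\nsubseteq (x)$, you cannot have $x\wt{I^2}=I^3$. So $x\wt{I^2}=xI^2$, and $\wt{I^2}=I^2$ because $x$ is $A$-regular.
\end{enumerate}
Together with $\wt I=I$ and $\wt{I^n}=I^n$ for $n\geq 3$, this gives $\depth G(I)>0$, so $G(I)$ is Cohen--Macaulay.
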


\begin{proof}
  (\ref{csk1}) For $d=0$, from the proof of Lemma \ref{prop4.1}, it follows that $\rho_{3}(I)+3\rho_{4}(I)+\ldots+\dbinom{s-1}{2} \rho_{s}(I)=1.$ This implies $\rho_{j}(I)=0$ for all $j \geq 4$ and $\rho_3(I)=1.$ Hence,  $\deg h_I(t) =3.$ 

For $d=1$, from the proof of Lemma \ref{prop4.1}, it follows that $\displaystyle \sum_{j\geq2}(j-1)\nu_{j}(I)=1$ for all $j\geq 0,$ where $\nu_j(I)=\lambda(I^{j+1}/JI^j)$. This implies that $\nu_{j}(I)=0$ for all $j \geq 3$ and $\nu_2(I)=1.$ From \cite[Theorem 2.5$(c)$]{rossi2010hilbert}, we get $\deg h_I(t) = 3,$ and therefore,  $h_{I}(t)=\nu_{0}(I)+(e_{0}(I)-\nu_{1}(I)-\lambda(A/I))t+(\nu_{1}(I)-1))t^{2}+t^3.$

\noindent
(\ref{csk2}) From part (\ref{csk1}) and the proof of Lemma \ref{prop4.1}, we have $s^*(I)\leq r_J(I)=3.$ Since $I$ is integrally closed, thus $\wt{I}=I$. Therefore, it suffices to show that $\wt{I^2}=I^2.$ 
From \cite[1.5(b)]{Part2}, we have $e_{0}(I)=\widetilde{e}_{0}(I)$,   $e_{1}(I)=\widetilde{e}_{1}(I)$ and $e_{2}(I)=\widetilde{e}_{2}(I)+\displaystyle \sum_{n \geq 0}\lambda(\widetilde{I^{n+1}}/I^{n+1})$. Thus, $e_{2}(I)=\widetilde{e}_{2}(I)+\lambda(\widetilde{I^{2}}/I^{2}).$ Hence,
 \begin{equation}\label{e2tilde}
     \widetilde{e}_{2}(I)+\lambda(\widetilde{I^{2}}/I^{2})=\widetilde{e}_{1}(I)-\widetilde{e}_{0}(I)+\lambda(A/\wt{I})+1.
 \end{equation}
  From \cite[Theorem 2.5$(c)$]{rossi2010hilbert}, we have $\wt{e}_0(I)=\lambda(A/J),$ $\wt{e}_{1}(I)=\displaystyle \sum_{j\geq0}\wt{\nu_j}(I),$ and $\wt{e}_{2}(I)=\displaystyle \sum_{j\geq1}j\wt{\nu_j}(I),$ where $\wt{\nu_j}(I)=\lambda(\wt{I^{j+1}}/J\wt{I^j})$ for all $j\geq 0.$ Therefore, from \eqref{e2tilde}, we obtain $\lambda(\wt{I^2}/I^2)+\lambda(\wt{I^3}/J\wt{I^2})=1.$ If $\lambda(\wt{I^2}/I^2)=1$ then $I^3\subseteq \wt{I^3}=J\wt{I^2}\subseteq J,$ which is a contradiction to $I^3\nsubseteq J.$ Thus, $\widetilde{I^{2}}=I^{2}$.
\end{proof}

We now consider the second extremal case 
for integrally closed 
$\m$-primary ideals in dimension two. The following proposition describes the depth of 
$G(I)$ and gives a criterion for the Cohen–Macaulayness of 
$\wt{G}(I).$

\begin{proposition}
\label{depth}
    Let $(A, \m) $ be a two dimensional Cohen-Macaulay local ring and $I$ an $\m$-primary integrally closed ideal with minimal reduction $J$. If $e_{2}(I)=e_{1}(I)-e_{0}(I)+\lambda(A/I)+1$ and $I^3\nsubseteq J$ then the following hold:
    \begin{enumerate}[\normalfont(i)]
        \item \label{depth1}  $\depth G(I)=0$ or $2$,
        \item  \label{depth2} $\wt{G}(I)$ is Cohen-Macaulay if and only if $\wt{I^2}\cap J=JI$. In particular, $\wt{G}(\m)$ is Cohen-Macaulay.
    \end{enumerate}
\end{proposition}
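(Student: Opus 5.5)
For part (i) we would repeat the argument of Lemma \ref{lemma4.4}. Choose $x\in I\setminus I^2$ superficial for $I$ with $I_1=I/(x)$ integrally closed in $A_1=A/(x)$, and assume $\depth G(I)>0$. Then $x^*$ is $G(I)$-regular, so $(I^{n+1}:x)=I^n$ for all $n$. Together with \cite[Proposition 1.2]{rossi2010hilbert}, this gives $e_i(I_1)=e_i(I)$ for $i=0,1,2$, and also $\lambda(A_1/I_1)=\lambda(A/I)$. Hence $e_2(I_1)=e_1(I_1)-e_0(I_1)+\lambda(A_1/I_1)+1$. Write $J=(x,y)$ and $J_1=(y)A_1$. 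We need $I_1^3\nsubseteq J_1$. Suppose instead $I^3\subseteq J+(x)=J$; this is excluded by hypothesis, so $I_1^3\nsubseteq J_1$ holds. Proposition \ref{csk}(\ref{csk2}) then gives $G(I_1)$ Cohen--Macaulay, i.e. $\depth G(I_1)=1$, and Sally's descent yields $\depth G(I)=2$.

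For part (ii) we would work with the Ratliff--Rush invariants $\wt{\nu_j}(I)=\lambda(\wt{I^{j+1}}/J\wt{I^j})$. In dimension two, \cite[Equation 3.1]{rossi2010hilbert} gives
\[e_1(I)=\sum_{j\geq0}\wt{\nu_j}(I),\qquad e_2(I)=\sum_{j\ge1}j\wt{\nu_j}(I).\]
Since $\wt{I}=I$, we have $\wt{\nu_0}(I)=\lambda(I/J)=e_0(I)-\lambda(A/I)$. The hypothesis therefore becomes $\sum_{j\ge2}(j-1)\wt{\nu_j}(I)=1$. Hence $\wt{\nu_2}(I)=1$ and $\wt{\nu_j}(I)=0$ for $j\ge3$, and the quantity $\wt{e}_0-\wt{\nu_0}$-type identity reads $e_1(I)=\wt{\nu_0}+\wt{\nu_1}+1$. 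We then compare with the Huneke--Ooishi type criterion for the filtration $\mathcal F=\{\wt{I^n}\}$. For a good filtration in a two-dimensional Cohen--Macaulay ring, \cite[Theorem 2.5]{rossi2010hilbert} (or Guerrieri--Rossi/Huckaba--Marley) gives $e_1(\mathcal F)\le\sum_j\lambda(\wt{I^{j+1}}/J\wt{I^j})$, with equality iff $G(\mathcal F)$ is Cohen--Macaulay. The discrepancy is measured by $\sum_j\lambda\bigl((\wt{I^{j+1}}\cap J)/J\wt{I^j}\bigr)$ in the Valabrega--Valla criterion. So $\wt G(I)$ is Cohen--Macaulay iff $\wt{I^{j+1}}\cap J=J\wt{I^j}$ for all $j\ge1$.

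The key step is to show that, under our numerical data, this whole family of conditions reduces to the single $j=1$ condition $\wt{I^2}\cap J=JI$. For $j\ge3$ we have $\wt{I^{j+1}}=J\wt{I^j}$, so equality is automatic. For $j=2$ we argue as follows. $\wt{I^3}/J\wt{I^2}$ has length $1$. If $\wt{I^3}\cap J\neq J\wt{I^2}$, then $\wt{I^3}\subseteq J$ by length, hence $I^3\subseteq J$, contradicting the hypothesis. Thus only $j=1$ matters, and the equivalence follows. Alternatively, one can run the argument via Puthenpurakal \cite[Proposition 3.4]{Part2}, which says $\wt G(I)$ is Cohen--Macaulay in dimension two exactly when the Ratliff--Rush filtration behaves well mod $x$, and then check the surjectivity of the maps $g_j$ of \eqref{behave}. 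The main obstacle we expect is making the Valabrega--Valla equivalence precise for the Ratliff--Rush filtration, including the $j=2$ length argument. We would try the direct Valabrega--Valla route first.

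For $I=\m$ we check the condition directly. Since $J$ is generated by part of a minimal generating set of $\m$, we have $\m^2\cap J=J\m$. Moreover $\wt{\m^2}\subseteq\m^2$ modulo the Ratliff--Rush correction, and Remark \ref{imprmk}(ii) gives $\wt{\m^2}\cap J\subseteq J\m$. We would argue this last inclusion via $\mu$-counting: an element of $J\setminus J\m$ is a minimal generator of $\m$ and so cannot lie in $\wt{\m^2}\subseteq\m^2$ when $\depth A>0$.
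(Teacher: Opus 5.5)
\textbf{Verdict: the proof of (i) and of the equivalence in (ii) is correct and follows the paper, but the final claim about $\m$ has a genuine gap.}

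\textbf{What matches the paper.} For (i) you pass to $A/(x)$, apply Proposition~\ref{csk}(\ref{csk2}) and use Sally's descent. For (ii) you apply Valabrega--Valla to $\{\wt{I^n}\}$ (\cite[Theorem 1.1]{rossi2010hilbert}). The facts $\wt{\nu_2}(I)=1$ and $\wt{\nu_j}(I)=0$ for $j\geq 3$, together with the length-one argument at $j=2$ using $I^3\nsubseteq J$, leave only $\wt{I^2}\cap J=JI$. Going straight to Valabrega--Valla also makes the paper's case split via (i) unnecessary.

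One side remark of yours is wrong, though you never use it. Equality in $e_1(\mathcal F)\leq\sum_j\lambda(F_{j+1}/JF_j)$ characterizes $\depth G(\mathcal F)\geq d-1$, not Cohen--Macaulayness. For the Ratliff--Rush filtration in dimension two this equality always holds; that is exactly why $e_1(I)=\sum_j\wt{\nu_j}(I)$. So it says nothing about whether $\wt G(I)$ is Cohen--Macaulay.

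\textbf{The gap: the claim $\wt{\m^2}\cap J=J\m$.}
\begin{enumerate}
\item Your argument rests on $\wt{\m^2}\subseteq\m^2$, but the inclusion runs the other way. When $\wt{\m^2}=\m^2$ there is nothing to prove, since $\m^2\cap J=J\m$.
\item Being a minimal generator of $\m$ does not keep an element out of $\wt{\m^2}$. Already in $k[[t^4,t^5,t^{11}]]$ one has $t^{11}\m\subseteq\m^3$, so $t^{11}\in\wt{\m^2}\setminus\m^2$.
\item Remark~\ref{imprmk}(ii) concerns $\dim A_1=0$ and only gives an injection $\wt{I^2}/I^2\to I_1/I_1^2$, so it does not help here.
\end{enumerate}
The paper simply cites \cite[2.10]{ankit}.

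\textbf{A self-contained repair.} If $z\m^k\subseteq\m^{k+2}=\m^2\m^k$, the determinant trick (with the faithful module $\m^k$) shows that $z$ is integral over $\m^2$. Hence $\wt{\m^2}\subseteq\overline{\m^2}$. Since $J$ is a reduction of $\m$ generated by a regular sequence, the Huneke--Itoh theorem gives $\overline{\m^2}\cap J=\overline{J^2}\cap J=J\overline{J}=J\m$.

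The same argument gives $\wt{I^2}\cap J\subseteq\overline{I^2}\cap J=JI$ for every integrally closed $I$. So the criterion in (ii) is automatically satisfied, and $\wt G(I)$ is always Cohen--Macaulay under the hypotheses.

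This also removes a point you share with the paper. In (i), $x$ must be chosen inside $J$ so that $J=(x,y)$ and $(y)A_1$ is a minimal reduction of $I_1$; nothing in the proposal (or the paper) justifies this choice. You can bypass it: if $\depth G(I)>0$, then $I^n=\wt{I^n}$ for all $n$, so $G(I)=\wt G(I)$ is Cohen--Macaulay.
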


\begin{proof}
(\ref{depth1})
Since the residue field is infinite, we choose $x_1 \in I\setminus I^2$, such that $I/(x_1)$ is integrally closed. Set $A_1=A/(x_1)$, $I_1=IA_1$ and  $J_1=JA_1.$   If $\depth G(I)=0,$ there is nothing to prove. Hence, assume that  $\operatorname{depth}G(I) \neq 0.$ Since $\lambda(A_1/I_1)=\lambda(A/I),$ and  by \cite[Proposition 1.2]{rossi2010hilbert}, we have $e_{i}(I_1)=e_{i}(I)$ for $0\leq i\leq 2.$ Thus, $e_{2}(I_1)=e_{1}(I_1)-e_{0}(I_1)+\lambda(A_1/I_1)+1$. Since $I^3\nsubseteq J$, we have $I^3_1\nsubseteq J_{1}$. Therefore, by Proposition \ref{csk} (\ref{csk2}), it follows that $\operatorname{depth}G(I_1)=1.$ Hence, by Sally's descent $\operatorname{depth}G(I)=2.$

\noindent (\ref{depth2})
  By part (\ref{depth1}), we have $\depth G(I)=0 \text{ or }2.$ If $\depth G(I)=2,$ then we have nothing to prove. Assume that $\depth G(I)=0.$ Set $\wt{v_j}(I)=\lambda(\wt{I^{j+1}}/J\wt{I^j})$ for all $j\geq 0.$ From \cite[Equation 3.1]{rossi2010hilbert}, we have $e_1(I)= \displaystyle \sum_{j\geq 0}\wt{v_j}(I)$ and $e_2(I)=\displaystyle \sum_{j\geq 1}j\wt{v_j}(I).$ 
  Since $e_{2}(I)=e_{1}(I)-e_{0}(I)+\lambda(A/I)+1$, we observe that $v_2(I)=1$, $v_j(I)=0$ for all $j\geq 3.$ It follows that  $\wt{I^{n+1}}\cap J=J\wt{I^n}$ for all $n\geq 3$. We have the following short exact sequence: $$0\longrightarrow \wt{I^3}\cap J/J\wt{I^2}\longrightarrow \wt{I^3}/J\wt{I^2}\longrightarrow \wt{I^3}/\wt{I^3}\cap J\to 0.$$ Since $I^3 \nsubseteq  J$, we have $\wt{I^3}\cap J=J\wt{I^2}$.
Indeed, if $\widetilde{I^3} = \widetilde{I^3}\cap J$, then $I^3 \subseteq \widetilde{I^3} \subseteq J$,
which is a contradiction. Thus the result now follows from  \cite[Theorem 1.1]{rossi2010hilbert}.
Moreover, since $\widetilde{\m^2}\cap J = J\m$ (see \cite[2.10]{ankit}), it follows that $\widetilde{G}(\m)$ is Cohen-Macaulay.
   \end{proof}

We now prove the main result of this section. In the following theorem, we establish an upper bound for $e_{3}(I)$ whenever $e_2(I)=e_1(I)-e_0(I)+\lambda(A/I)+1$.

\begin{theorem}\label{e3=1}
Let $(A,\m) $ be a Cohen-Macaulay local ring of dimension $d \geq 2$ and let $I$ be an $\m$-primary integrally closed ideal. Let $J$ be a minimal reduction of $I$ with $I^3\nsubseteq J.$ If $e_2(I)=e_1(I)-e_0(I)+\lambda(A/I)+1$ and $\wt{I^2}\cap J=JI$, then the following hold:
\begin{enumerate} [\normalfont(i)]
    \item  \label{e3=11}For $d=2,$ $e_3(I)\leq 1.$ Further, if  $e_3(I)=1$ then $G(I)$ is Cohen-Macaulay.

    \item\label{e3=12} For $d\geq 3,$ $e_3(I)\leq 1.$ Further, if $d=3$ and $e_3(I)=1$, then $\wt{G}(I)$ is Cohen-Macaulay.  
\end{enumerate}
\end{theorem}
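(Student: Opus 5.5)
The plan is to follow the proof of Proposition \ref{e3bd} almost verbatim. The only change is that $\wt{e}_3(I)$ is now $1$ instead of $0$, and the hypotheses $I^3\nsubseteq J$ and $\wt{I^2}\cap J=JI$ are used to pin down the Ratliff--Rush data.

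\textbf{Case $d=2$.} Set $\wt{\nu_j}(I)=\lambda(\wt{I^{j+1}}/J\wt{I^j})$. By \cite[Equation 3.1]{rossi2010hilbert} we have $e_1(I)=\sum_{j\ge0}\wt{\nu_j}(I)$ and $e_2(I)=\sum_{j\ge1}j\wt{\nu_j}(I)$. As in the proof of Proposition \ref{depth}, the hypothesis $e_2(I)=e_1(I)-e_0(I)+\lambda(A/I)+1$ gives $\sum_{j\ge2}(j-1)\wt{\nu_j}(I)=1$. Hence $\wt{\nu_2}(I)=1$ and $\wt{\nu_j}(I)=0$ for $j\ge3$.

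The hypothesis $\wt{I^2}\cap J=JI$ gives, by Proposition \ref{depth}(ii), that $\wt{G}(I)$ is Cohen--Macaulay. So \cite[Theorem 2.5(c)]{rossi2010hilbert} applies to the Ratliff--Rush filtration, and
$$\wt{e}_3(I)=\sum_{j\ge2}\binom{j}{2}\wt{\nu_j}(I)=1.$$
I expect this to be the step needing care: the formula for $\wt{e}_3$ requires depth of $\wt{G}(I)$ to be $\geq d-1$. The same formula was used silently in Proposition \ref{e3bd}, so I would invoke Proposition \ref{depth}(ii) explicitly here. Then \cite[1.5(b)]{Part2} gives
$$e_3(I)=\wt{e}_3(I)-\sum_{j\ge0}\lambda(\wt{I^{j+1}}/I^{j+1})=1-\sum_{j\ge0}\lambda(\wt{I^{j+1}}/I^{j+1})\le 1.$$
If equality holds, then $\wt{I^j}=I^j$ for all $j$, so $\depth G(I)>0$. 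Proposition \ref{depth}(i) then gives $\depth G(I)=2$.

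\textbf{Case $d=3$.} Choose a superficial $x\in I\setminus I^2$ with $I_1=I/(x)$ integrally closed in $A_1=A/(x)$. The hypotheses descend:
\begin{itemize}
\item $e_i(I_1)=e_i(I)$ for $i\le2$, and $\lambda(A_1/I_1)=\lambda(A/I)$;
\item $J_1=JA_1$ is a minimal reduction of $I_1$ with $I_1^3\nsubseteq J_1$, since $x\in J$;
\item $\wt{I_1^2}\cap J_1=J_1I_1$. This is the other delicate point. I would derive it from the $d=2$ analysis as follows. It is equivalent to $\wt{\nu_1}(I_1)$ taking its minimal value, and $\wt{\nu_0}$ and $\wt{\nu_2}$ are forced. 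Alternatively, I would argue via Remark \ref{imprmk}.
\end{itemize}
If this descent proves troublesome, the fallback is to note that in the $d=2$ argument only $\wt{e}_3(I_1)=1$ is needed. That value follows once $\wt{G}(I_1)$ has positive depth, which holds for Ratliff--Rush filtrations in dimension $\ge1$ (so depth $\ge d-1$ in dimension 2). Hence $\wt{e}_3(I_1)=\sum_j\binom{j}{2}\wt{\nu_j}(I_1)=1$ anyway.

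Next apply \eqref{123}, then sum the exact sequences \eqref{behave}. This gives $\sum_j\lambda((I^{j+1}:x)/I^j)\le\sum_j\lambda(\wt{I_1^{j+1}}/I_1^{j+1})$, hence $e_3(I)\le\wt{e}_3(I_1)=1$. In case of equality, every $g_j$ is surjective, so the Ratliff--Rush filtration behaves well modulo $x$ and $\wt{G}(I)/x\wt{G}(I)=\wt{G}(I_1)$ \cite[Remark 4.5]{Part2}. Since $\wt{G}(I_1)$ is Cohen--Macaulay (by Proposition \ref{depth}(ii), or \cite[Proposition 3.4]{Part2}), Sally's descent gives that $\wt{G}(I)$ is Cohen--Macaulay.

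\textbf{Case $d\ge4$.} Induct using $e_3(I)=e_3(I_1)$ from \cite[Proposition 1.2]{rossi2010hilbert}, after checking again that all hypotheses pass to $A/(x)$.
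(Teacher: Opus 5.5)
Your outline follows the paper's proof case by case. The $d=2$ part and the inequality $e_3(I)\le 1$ for $d\ge 3$ are fine. Your observation that the inequality only needs $\wt{e}_3(I_1)=1$, which follows from $\depth\wt{G}(I_1)\ge 1$, is correct.

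\textbf{The gap.} It lies in the equality case for $d=3$. There you need $\wt{G}(I_1)$ to be Cohen--Macaulay, i.e.\ (via Proposition~\ref{depth}(ii) applied to $I_1$) that $\wt{I_1^2}\cap J_1=J_1I_1$. Neither of your suggested justifications establishes this.
\begin{enumerate}[(a)]
\item The ``minimal value'' argument fails. Since $\depth\wt{G}(I_1)\ge 1$, with $\wt{\nu_0}(I_1)=e_0(I)-\lambda(A/I)$ and $\wt{\nu_2}(I_1)=1$, one gets $\wt{\nu_1}(I_1)=e_1(I)-e_0(I)+\lambda(A/I)-1$. So $\wt{\nu_1}(I_1)$ is already fixed by the Hilbert coefficients and has no ``minimal value'' to attain. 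The condition you need asks that the submodule $(\wt{I_1^2}\cap J_1)/J_1I_1$ of $\wt{I_1^2}/J_1I_1$ vanish, and that is not a condition on the length $\wt{\nu_1}(I_1)$.
\item Remark~\ref{imprmk} only gives an injection $\wt{I^2}/I^2\hookrightarrow\wt{I_1^2}/I_1^2$. That is the wrong direction for pushing an intersection property from $A$ down to $A_1$.
\item Your fallback only yields $\wt{e}_3(I_1)=1$, i.e.\ the inequality. It says nothing about Cohen--Macaulayness of $\wt{G}(I_1)$.
\item The alternative citation \cite[Proposition 3.4]{Part2} is used in the paper only when $\wt{\nu_j}=0$ for all $j\ge 2$, whereas here $\wt{\nu_2}(I_1)=1$.
\end{enumerate}

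\textbf{The fix.} The descent must be done \emph{after} exploiting $e_3(I)=1$; the paper asserts $\wt{I_1^2}\cap J_1=J_1I_1$ only at that point. Equality forces every $g_j$ in \eqref{behave} to be surjective. For $j=2$ this says $\wt{I_1^2}=\wt{I^2}A_1$. Since $x\in J$, the modular law and the hypothesis $\wt{I^2}\cap J=JI$ give
\[
\wt{I_1^2}\cap J_1=\frac{(\wt{I^2}+(x))\cap J}{(x)}=\frac{(\wt{I^2}\cap J)+(x)}{(x)}=\frac{JI+(x)}{(x)}=J_1I_1 .
\]
Now $I_1$ is integrally closed, satisfies $e_2(I_1)=e_1(I_1)-e_0(I_1)+\lambda(A_1/I_1)+1$, and has $I_1^3\nsubseteq J_1$. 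So Proposition~\ref{depth}(ii) gives $\wt{G}(I_1)$ Cohen--Macaulay, and Sally's descent finishes the argument as you wrote.

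\textbf{On $d\ge 4$.} The same issue affects your plan of ``checking that all hypotheses pass to $A/(x)$'': the intersection hypothesis does not obviously descend without equality. Only the inequality is claimed for $d\ge 4$, and it uses neither $I^3\nsubseteq J$ nor the intersection hypothesis. So you should induct on that weaker statement, as your fallback already suggests.
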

\begin{proof} 
(\ref{e3=11}) Set $\wt{\nu_j}(I)=\lambda(\wt{I^{j+1}}/J\wt{I^j})$ for all $j\geq 0.$ By \cite[Theorem 2.5$(c)$]{rossi2010hilbert} and the proof of Proposition \ref{depth}, we get $ \wt{e}_{3}(I)= \displaystyle \sum _{j \geq 2}\binom{j}{2}\wt{\nu_j}(I)=1.$ Furthermore, from the proof of Theorem \ref{e3bd} (\ref{e3bd(i)}), we have $ e_{3}(I)=1- \displaystyle\sum_{j\geq 0}\lambda\left(\frac{\widetilde{I^{j+1}}}{I^{j+1}}\right)\leq 1.$
    Suppose equality holds, then $\widetilde{I^{j}}=I^{j}$ for all $j \geq 1.$ This implies $\depth G(I) >0.$ The result now follows from Proposition \ref{depth}.

\noindent
(\ref{e3=12})   We proceed by induction on the dimension $d$. Suppose $d=3.$ We choose $x \in I\setminus I^2$ a superficial element for $I,$ such that $I/(x)$ is integrally closed. From the proof of Theorem \ref{e3bd} (\ref{e3bd(ii)}), we get $e_3(I)\leq 1$. Suppose equality holds, then the Ratliff-Rush filtration of $I$ behaves well modulo $(x).$ Therefore, it follows that  $\depth \widetilde{G}(I) \geq 2.$ Furthermore, from \cite[Remark 4.5]{Part2}, $\widetilde{G}(I)/x\widetilde{G}(I)=\widetilde{G}({I_1}).$ We also have $\wt{I^2_1}\cap J_1=J_1I_1$. Therefore, from part (\ref{e3=11}), $\wt{G}(I_1)$ is Cohen-Macaulay. Thus from Sally's descent $\widetilde{G}(I)$ is Cohen-Macaulay.

Suppose $d\geq 4.$ Let $x \in I$ be chosen as in the case $d=3.$ By an argument similar to that used in the proof of Theorem \ref{e3bd} (\ref{e3bd(ii)}), we obtain  $e_3(I)\leq 1.$ 
\end{proof}

The following example provides an infinite class of examples where $e_{3}(\m)=1$.

\begin{example} \normalfont
   Let $A=k[|x,y,z_1,\ldots z_d|]/(x^3,xy^2,y^3)$ and $I=\m$. Then $\dim(A)=d$. Set $J=(z_1,\ldots,z_d)$. It is easy to verify that $J$ is a minimal reduction of $\mathfrak{m}$, and that $\mathfrak{m}^3 \nsubseteq J$.
The associated graded ring $G(\mathfrak{m})$ is Cohen-Macaulay. The Hilbert series of $\m$ is:  $$HS_{\m}(t)=\frac{1+2t+3t^2+t^3}{(1-t)^{d}}.$$ Hence, $e_0(\m)=7, e_1(\m)=11, e_2(\m)=6 \text{ and } e_3(\m)=1.$  In particular, $e_2(\m)=e_1(\m)-e_0(\m)+\lambda(A/\m)+1,$ and hence the hypothesis of Theorem \ref{e3=1} is satisfied. When $d=2$, this gives  an example of Proposition \ref{depth}, where the depth of $G(\m)$ is $2.$ 
\end{example}

We conclude this section with the following question.
\begin{question} \normalfont
Let $(A, \mathfrak m)$ be a Cohen-Macaulay local ring of dimension $d$, and let $I$ be an $\mathfrak m$-primary integrally closed ideal satisfying $e_2(I)=e_1(I)-e_0(I)+\lambda(A/I)+1.$ Is it possible to obtain a lower bound for $\type_I(A)$, analogous to Theorem \ref{bd}? Moreover, does equality  force   $G(I)$ to be Cohen-Macaulay?

\end{question}

\providecommand{\bysame}{\leavevmode\hbox to3em{\hrulefill}\thinspace}
\providecommand{\MR}{\relax\ifhmode\unskip\space\fi MR }
\providecommand{\MRhref}[2]{
  \href{http://www.ams.org/mathscinet-getitem?mr=#1}{#2}
}

\end{document}